\numberwithin{equation}{section}
\newtheorem{theorem}{Theorem}[section] 
\newtheorem{proposition}[theorem]{Proposition}
\newtheorem{corollary}[theorem]{Corollary} 
\theoremstyle{definition} 
\newtheorem{remark}[theorem]{Remark} 
\newtheorem{problem}[theorem]{Problem}
\newcommand\Ecal{\mathcal{E}}
\newcommand\Ascr{\mathscr{A}} 
\newcommand\Cscr{\mathscr{C}}
\newcommand\Lscr{\mathscr{L}} 
\newcommand\Oscr{\mathscr{O}}
\newcommand\C{\mathbb{C}} 
\newcommand\CP{\mathbb{CP}}
\newcommand\N{\mathbb{N}} 
\newcommand\R{\mathbb{R}}
\newcommand\Z{\mathbb{Z}}
\newcommand\igot{\mathfrak{i}}
\renewcommand\igot{\mathfrak{i}}
\renewcommand\imath{\igot}
\newcommand\hra{\hookrightarrow} 
\newcommand\lra{\longrightarrow} 
\newcommand\longhookrightarrow{\ensuremath{\lhook\joinrel\relbar\joinrel\rightarrow}} 
\newcommand\wh{\widehat}
\newcommand\Id{\mathrm{Id}}
\def\Ell1{\mathrm{Ell_1}} 
\def\CEll1{\mathrm{CEll_1}}
\newcommand\supp{\mathrm{supp}}
\newcommand\boldA{\mathbf A}
\begin{document} 

\title{The universal family of punctured Riemann surfaces is Stein}

\author{Franc Forstneri{\v c}}

\address{Faculty of Mathematics and Physics, University of Ljubljana, Jadranska 19, 1000 Ljubljana, Slovenia}

\address{Institute of Mathematics, Physics, and Mechanics, Jadranska 19, 1000 Ljubljana, Slovenia}

\email{franc.forstneric@fmf.uni-lj.si}

\subjclass[2020]{Primary 32G15; secondary 32Q28, 32Q56}


\date{12 February 2025; this version 14 April 2026}

\keywords{Riemann surface, Teichm\"uller space, universal family, Stein manifold, Oka manifold} 

\begin{abstract}
We show that the universal Teichm\"uller family $V(g,n)$ of 
compact Riemann surfaces of genus $g\ge 0$ with $n>0$ punctures
is a Stein manifold. 
We describe its basic function theoretic properties and pose some 
challenging questions. We show in particular that 
the space of fibrewise algebraic functions on the universal family 
is dense in the space of holomorphic functions, and there is a
fibrewise algebraic map of the universal family to a Euclidean space 
which restricts to a proper embedding on any fibre. We also obtain a relative
Oka principle for holomorphic fibrewise algebraic maps of the universal family
to any flexible algebraic manifold.
\end{abstract}

\maketitle

%
%
\section{Introduction}\label{sec:intro}

The notion of a Teichm\"uller space originates in the papers 
\cite{Teichmuller1940,Teichmuller1943,Teichmuller1944} 
of Oswald Teichm\"uller, who defined a
complex manifold structure on the set of isomorphism classes of 
marked closed Riemann surfaces of genus $g$. 
Ahlfors \cite{Ahlfors1960} showed that this complex structure
can be defined by periods of holomorphic abelian differentials. 
In \cite{Teichmuller1944}, Teichm\"uller also introduced the
universal Teichm\"uller curve -- a space $V$ over a Teichm\"uller
space $T$ whose fibre over $t\in T$ is a Riemann surface 
$(M,J_t)$ representing that point, also called the universal family 
of Riemann surfaces over $T$  -- and showed that it has the structure
of a complex manifold. Teichm\"uller's theory was developed  
by Ahlfors and Bers \cite{AhlforsBers1960} and by 
Grothendieck, who gave a series of lectures in Cartan's seminar 1960--1961;
see the discussion and references in \cite{ACampoJiPapadopoulos2016}. 
Grothendieck asked whether every finite dimensional Teichm\"uller space 
is a Stein manifold \cite[p.\ 14]{Grothendieck1962}. 
An affirmative answer was given by 
Bers and Ehrenpreis \cite[Theorem 2]{BersEhrenpreis1964} who 
showed that any finite dimensional Teichm\"uller space embeds
as a domain of holomorphy in a complex Euclidean space, hence
is Stein. (Another proof was given by Wolpert \cite{Wolpert1987};
see also the surveys by Bers \cite{Bers1972} and Nag \cite{Nag1988}.) 
The Teichm\"uller space $T(M)$ of a Riemann surface $M$ is 
finite dimensional if and only if $M=\wh M\setminus \{p_1,\ldots,p_n\}$
is a compact Riemann surface $\wh M$ of some genus $g\ge 0$
with $n\ge 0$ punctures. Such $M$ is said to be of finite conformal
type, and its Teichm\"uller space is denoted $T(g,n)$.
The universal family $\pi:\wh V(g,n)\to T(g,n)$ is a holomorphic
submersion whose fibre over $t\in T(g,n)$ is the 
compact surface $\wh M$ endowed with the 
complex structure $J_t$ determined by $t$, and with 
$n$ canonical holomorphic sections $s_1,\ldots,s_n:T(g,n)\to \wh V(g,n)$ 
with pairwise disjoint images representing the punctures. 
(See Nag \cite[pp.\ 322--323]{Nag1988}.) The open subset
\begin{equation}\label{eq:Vgn}
	V(g,n) = \wh V(g,n)\setminus \bigcup_{i=1}^n s_i(T(g,n))
\end{equation}
of $\wh V(g,n)$ is the universal family of $n$-punctured 
compact Riemann surfaces of genus $g$.
If $2g+n\ge 3$ then the Teichm\"uller family $\pi:V(g,n)\to T(g,n)$ 
is the universal object in the complex analytic category of topologically marked holomorphically varying families of $n$-punctured genus $g$ Riemann surfaces (see \cite[Theorem 5.4.3]{Nag1988}). 

In this paper, we establish several function 
theoretic properties of the universal family $V(g,n)$.
Our first main result is the following.

%
%
\begin{theorem}\label{th:main}
The Teichm\"uller family $V(g,n)$ for $n\ge 1$ is a Stein manifold.
\end{theorem}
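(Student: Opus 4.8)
The plan is to exhibit $V(g,n)$ as the total space of a holomorphic fibre bundle (or at least a fibration with Stein fibres) over the Stein base $T(g,n)$, and to promote Steinness of base and fibres to Steinness of the total space. Concretely, the base $T:=T(g,n)$ is known to be Stein by the Bers--Ehrenpreis theorem quoted in the introduction, and each fibre of $\pi\colon V(g,n)\to T$ is an $n$-punctured compact Riemann surface with $n\ge 1$, hence a non-compact Riemann surface and therefore itself Stein (every open Riemann surface is Stein, by Behnke--Stein). So the issue is purely one of assembling local Steinness over $T$ into a global statement, and the natural tool is the following principle: if $\pi\colon X\to T$ is a holomorphic submersion onto a Stein manifold $T$, and $T$ admits a strictly plurisubharmonic exhaustion $\tau$, and moreover there is a continuous function on $X$ that is strictly plurisubharmonic along the fibres and proper on the fibres (a fibrewise exhaustion), then $\pi^*\tau + (\text{fibrewise exhaustion})$ -- after suitable convexification of $\tau$ -- is a strictly plurisubharmonic exhaustion of $X$, so $X$ is Stein. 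This is the classical mechanism behind "a holomorphic fibre bundle with Stein base and Stein fibre is Stein" in the cases where it holds, and here the fibre dimension is $1$, so the known pathologies (Skoda, Demailly) for higher-dimensional Stein fibres do not arise.

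The key steps, in order, would be: \emph{(1)} Recall/cite that $T(g,n)$ is Stein and fix a smooth strictly plurisubharmonic exhaustion $\tau\colon T\to\R$. \emph{(2)} Recall that the compactified family $\wh V(g,n)\to T$ is a proper holomorphic submersion with compact Riemann surface fibres, together with the disjoint holomorphic sections $s_1,\dots,s_n$, and that $V(g,n)=\wh V(g,n)\setminus\bigcup_i s_i(T)$. \emph{(3)} Construct a continuous (ideally smooth off the sections) function $\rho\colon V(g,n)\to\R$ which near each section $s_i$ behaves like $-\log(\text{distance to }s_i)$ in a fibrewise holomorphic coordinate, so that $\rho$ is an exhaustion of each fibre $M_t=\wh M\setminus\{p_1(t),\dots,p_n(t)\}$, and which is strictly plurisubharmonic along the fibres. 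Near a section, if $z$ is a fibrewise coordinate vanishing on $s_i$, then $-\log|z|^2$ is fibrewise harmonic; one needs to add a correcting term, e.g. use $\rho = \sum_i \chi_i\big(|z_i|^{-2}\big)$ for a convex increasing $\chi_i$, or use a hermitian metric on the line bundle $\Ocal_{\wh V}(\sum s_i)$ and take $\rho=-\log\|\sigma\|^2$ for the canonical section $\sigma$ vanishing on $\bigcup s_i$, modified by a large multiple of a fibrewise strictly subharmonic function to gain strict fibrewise plurisubharmonicity. Away from the sections $\wh V(g,n)$ is compact in the fibre direction, so there $\rho$ need only be bounded and can be absorbed. \emph{(4)} Form $\Phi = \phi(\tau\circ\pi) + \rho$ on $V(g,n)$, where $\phi$ is a rapidly increasing convex function; $\phi\circ\tau\circ\pi$ contributes nonnegative Levi form in the base directions and strict positivity after convexification dominates any negative cross-terms coming from $\rho$ (which are controlled on $\pi^{-1}(K)$ for $K\Subset T$), while $\rho$ gives strict positivity in the fibre direction and properness along the fibres; together with properness of $\tau\circ\pi$ in the base directions, $\Phi$ is a strictly plurisubharmonic exhaustion of $V(g,n)$. \emph{(5)} Conclude by Grauert's solution of the Levi problem that $V(g,n)$ is Stein.

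The main obstacle is step \emph{(3)}-\emph{(4)}: producing the fibrewise exhaustion $\rho$ and, more delicately, controlling the mixed terms in the Levi form of $\Phi$ near the punctures, where $\rho\to+\infty$ and its derivatives in the base directions (coming from the motion of the puncture $p_i(t)$ with $t$) are also blowing up. The point is that these base-direction derivatives of $\rho$ blow up only polynomially in $\rho$ while $\rho$ itself and its fibre Levi form blow up faster (the $-\log|z|^2$ type singularity has Levi form $\sim |z|^{-2}$ in the fibre), so a Cauchy--Schwarz estimate shows the fibre positivity absorbs the cross terms once $\phi(\tau)$ is taken steep enough on each $\pi^{-1}(K)$. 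Making this quantitative -- choosing the metric on $\Ocal_{\wh V}(\sum s_i)$, the convex functions $\chi_i$, and the convexifier $\phi$ in the right order -- is the technical heart; everything else (Steinness of $T$, Steinness of open Riemann surfaces, Grauert's theorem) is quoted. An alternative, possibly cleaner route avoiding the Levi-form bookkeeping would be to use the known fact that $T(g,n)$ carries a proper holomorphic embedding into some $\C^N$ and that $\wh V(g,n)$, being a proper family of curves, embeds properly into $\C^N\times\CP^M$ with the sections mapping into $\C^N\times H$ for a hyperplane $H$; then $V(g,n)$ is the complement in this projective-over-affine family of an analytic hypersurface, and one invokes that the complement of a hypersurface in a Stein-over-affine curve family is Stein. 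But I expect the direct exhaustion argument above to be the intended one.
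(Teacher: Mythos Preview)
Your overall strategy---build a fibrewise strictly subharmonic exhaustion $\rho$ on $V(g,n)$, then add a rapidly growing convex function of $\tau\circ\pi$ to obtain a global strongly plurisubharmonic exhaustion and invoke Grauert---is exactly the paper's first proof (Theorem~\ref{th:Stein}). The paper even writes the final function as $\rho + \tau\circ\pi$ and gives the same argument you sketch for why a fast-growing $h$ in $h\circ\tau\circ\pi$ suffices.

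Where you and the paper diverge is in step (3), and here your diagnosis of the difficulty is slightly off. First, a correction: $-\log|z|^2$ is \emph{pluriharmonic} where $z\ne 0$ (for $z$ holomorphic on the total space), so its fibre Levi form is $0$, not $\sim|z|^{-2}$; and with a smooth hermitian metric on $\Ocal_{\wh V}(\sum s_i)$, the form $dd^c(-\log\|\sigma\|^2)$ is just the curvature, which is \emph{bounded} on $\wh V$---there are no blowing-up cross terms to absorb. The genuine issue is getting \emph{strict} fibrewise positivity: you propose to ``add a large multiple of a fibrewise strictly subharmonic function,'' but producing such a function globally on $V(g,n)$ is precisely the problem at hand (none exists on the compact fibres of $\wh V$), so this step is circular as stated.

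The paper sidesteps all of this with a clean trick: by Siu's theorem, $H=\bigcup s_i(T)$ admits an open Stein neighbourhood $U\subset\wh V$, and since the ideal sheaf of $H$ is principal, $U\setminus H$ is Stein and carries a strongly plurisubharmonic exhaustion $\phi$. This $\phi$ is already strongly plurisubharmonic on the \emph{total space} near $H$, so no Levi-form bookkeeping is needed there. The remaining patching---gluing $\phi$ with locally constructed fibrewise strongly subharmonic functions over the compact part of each fibre, via a partition of unity on the base---happens where everything is bounded, and then adding $h(\tau)\circ\pi$ finishes exactly as you describe. Your line-bundle route can be made to work (choose the metric so its curvature is fibrewise positive, possible since $\Ocal(\sum s_i)$ is relatively ample), but carrying that out rigorously needs a similar patching or a family-of-PDE argument; the Siu shortcut is what keeps the paper's proof short.
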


This is a special case of Theorems \ref{th:Stein} and \ref{th:Stein2}. 
The result is obvious if $g=0$ and $n\in \{1,2,3\}$ 
since $T(0,n)$ is then a singleton and $V(0,n)$ is biholomorphic to  
$\C$, $\C^*=\C\setminus \{0\}$, and $\C\setminus\{0,1\}$,
respectively. If $2g+n\ge 3$ then $T(g,n)$ is biholomorphic
to a bounded topologically contractible 
Stein domain in $\C^{3g-3+n}$ \cite[p.\ 161]{Nag1988}, 
but it cannot be holomorphically realised as a convex domain 
in $\C^{3g-3+n}$ if $g\ge 2$ since the Kobayashi metric and 
the Carath\'eodory metric on it differ 
(see Markovi\'c \cite{Markovic2018}), while they agree on 
a convex domain (see Lempert \cite{Lempert1981}).
Note also that $\wh V(g,n)$ is holomorphically convex and 
all holomorphic functions on it come from the base $T(g,n)$.

%
%
We provide three proofs of Theorem \ref{th:main}. The first one
in Sect.\ \ref{sec:proof} shows that $V(g,n)$ admits a strongly 
plurisubharmonic exhaustion function, hence is Stein
by a theorem of Grauert \cite{Grauert1958AM}
(see also \cite[Theorem 5.2.10]{Hormander1990}).  
%
%
Since every fibre of $V(g,n)$ is an affine algebraic curve, 
it is natural to expect that $V(g,n)$
admits holomorphic functions which are algebraic on every fibre. 
This is indeed the case. The second proof of Steinness of
$V(g,n)$ follows from Theorem \ref{th:A-embedding},  
which implies that holomorphic fibrewise algebraic functions on 
$V(g,n)$ separate points and provide holomorphic convexity. 
The proof relies on Grauert's theorem on coherence of direct images of 
coherent analytic sheaves and other techniques of complex 
analytic geometry. In Theorem \ref{th:OkaWeil} we show 
that the algebra $\Ascr(V(g,n))$ of fibrewise algebraic functions 
on $V(g,n)$ is dense in the algebra $\Oscr(V(g,n))$ 
of holomorphic functions in the compact-open topology.
A similar argument gives a holomorphic fibrewise algebraic map 
$V(g,n)\to\C^N$ for some $N\in\N$ which restricts
to a proper embedding on every fibre; 
see Theorem \ref{th:A-embedding-global} whose 
proof was contributed by Yiran Lin. Since the Teichm\"uller space 
$T(g,n)$ is Stein, it follows that $V(g,n)$ admits 
a proper holomorphic embedding in a Euclidean space which
is algebraic on every fibre; see Corollary \ref{cor:Vgn-affine}. 
This provides the third proof that $V(g,n)$ is a Stein manifold. 
In Sect.\ \ref{sec:AOP} we show that 
fibrewise algebraic maps of $V(g,n)$ to 
any algebraically special elliptic manifold (in particular,
to any flexible algebraic manifold) satisfy a 
relative Oka principle; see Theorem \ref{th:AOP}. 

In the remainder of this introduction, we mention some 
consequences of Theorem \ref{th:main} and pose a few 
open problems. 

By classical results of Remmert, Bishop, and Narasiman
(see \cite[Theorem 2.4.1]{Forstneric2017E} and the references
therein), $V(g,n)$ for $n\ge 1$ 
admits a proper holomorphic embedding
in $\C^N$ with $N=2\dim V(g,n)+1$, which equals 
$6g-6+2n+1$ if $2g+n\ge 3$.  When $\dim V(g,n)\ge 2$,
it also embeds properly holomorphically in $\C^N$ with 
$N=\big[\frac{3\dim V(g,n)}{2}\big]+1$ 
(see Eliashberg and Gromov \cite{EliashbergGromov1992} 
and Sch{\"u}rmann \cite{Schurmann1997}, or the 
exposition in \cite[Secs.\ 9.3--9.4]{Forstneric2017E}). 

Since the Teichm\"uller space $T(g,n)$ is contractible, 
the submersion $\pi:V(g,n)\to T(g,n)$ is 
smoothly trivial, and the inclusion of any fibre of $\pi$ in $V(g,n)$ 
is a homotopy equivalence. It follows that for any manifold $Y$, 
the restriction of a continuous map $V(g,n)\to Y$
to any fibre of $\pi$ lies in the same homotopy class.
The following is a corollary to this observation, Theorem \ref{th:main}, 
and the main result of Oka theory \cite[Theorem 5.4.4]{Forstneric2017E}.
(See also the surveys \cite{Forstneric2023Indag,Forstneric2025ICM,ForstnericLarusson2011}.)

%
%
\begin{corollary}\label{cor:1}
Let $\pi:V(g,n)\to T(g,n)$ be as above, $n\ge 1$, 
and let $Y$ be an Oka manifold. There is a holomorphic map
$V(g,n)\to Y$ in every homotopy class. Furthermore, a holomorphic map
$M_t=\pi^{-1}(t)\to Y$, $t\in T(g,n)$, from any fibre extends 
to a holomorphic map $V(g,n)\to Y$. More generally, given a closed  
complex subvariety $T'$ of $T(g,n)$, every continuous map
$F_0:V(g,n)\to Y$ which is holomorphic on $\pi^{-1}(T')$ 
is homotopic to a holomorphic map $F:V(g,n)\to Y$ by a 
homotopy which is fixed on $\pi^{-1}(T')$.
\end{corollary}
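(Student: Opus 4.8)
The plan is to deduce all three assertions from the Oka principle with approximation and interpolation for holomorphic maps from reduced Stein spaces to Oka manifolds, namely from \cite[Theorem 5.4.4]{Forstneric2017E}, using that $V(g,n)$ is a Stein manifold (Theorem \ref{th:main}). The one preliminary observation I need is that the relevant exceptional sets are honest closed complex subvarieties of $V(g,n)$: since $\pi$ is a holomorphic submersion, $\pi^{-1}(T')$ is a closed complex subvariety of $V(g,n)$ for every closed complex subvariety $T'\subseteq T(g,n)$ (locally it is cut out by the pullbacks of local defining functions of $T'$), and in particular each fibre $M_t=\pi^{-1}(t)$ is a closed complex submanifold of $V(g,n)$.

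I would prove the third (most general) assertion first, as it contains the other two. Given a closed complex subvariety $T'\subseteq T(g,n)$ and a continuous map $F_0:V(g,n)\to Y$ that is holomorphic on $\pi^{-1}(T')$, apply \cite[Theorem 5.4.4]{Forstneric2017E} to obtain a holomorphic map $F:V(g,n)\to Y$ together with a homotopy from $F_0$ to $F$ that is constant on $\pi^{-1}(T')$; this is exactly the claim. The first assertion is the special case $T'=\varnothing$: a chosen continuous representative of any homotopy class is then homotoped to a holomorphic map, which therefore represents the same class. For the second assertion, fix $t\in T(g,n)$ and a holomorphic map $h:M_t\to Y$. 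Because $T(g,n)$ is contractible, $\pi$ is smoothly trivial, so there is a smooth retraction $r:V(g,n)\to M_t$ with $r|_{M_t}=\mathrm{id}$; then $F_0:=h\circ r$ is a continuous extension of $h$ to $V(g,n)$ which is holomorphic on $M_t$. Applying the third assertion with $T'=\{t\}$ produces a holomorphic $F:V(g,n)\to Y$ with $F|_{M_t}=F_0|_{M_t}=h$, as wanted. (Alternatively one may note that, the fibre inclusion $M_t\hookrightarrow V(g,n)$ being a homotopy equivalence, the restriction map $[V(g,n),Y]\to[M_t,Y]$ is a bijection, which is the source both of the continuous extension of $h$ and of the fact that $F|_{M_t}$ and $F$ carry the same homotopy-theoretic information.)

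I do not expect a genuine obstacle beyond having Theorem \ref{th:main} available: once $V(g,n)$ is known to be Stein, the statement is a formal consequence of the Oka principle with interpolation. The only points that call for a word of justification are the two already flagged -- that $\pi^{-1}(T')$ is a closed complex subvariety of $V(g,n)$, which is immediate from $\pi$ being a submersion, and that $h$ admits a continuous (indeed smooth) extension over $V(g,n)$, which follows from the smooth triviality of the bundle $\pi$ over the contractible base $T(g,n)$ recorded before the statement.
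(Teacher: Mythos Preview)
Your proposal is correct and follows precisely the approach the paper intends: the paper states the corollary as an immediate consequence of Theorem~\ref{th:main}, the Oka principle with interpolation \cite[Theorem 5.4.4]{Forstneric2017E}, and the contractibility observation preceding the statement, and you have simply spelled out these ingredients in the expected way. The two points you flagged as needing a word of justification (that $\pi^{-1}(T')$ is a closed subvariety, and that a holomorphic map on a fibre extends continuously via the smooth trivialisation) are exactly the small details one must check, and your treatment of them is fine.
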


If $n\ge 1$ and $(g,n)\ne (0,1)$ then $V(g,n)$ is not simply connected, 
and its homotopy type is that of a finite bouquet of circles.
In this case, homotopically nontrivial maps $V(g,n) \to Y$ exist 
whenever the manifold $Y$ is not simply connected. 
This gives the following corollary. The last statement follows 
by taking the Oka manifold $Y=\C^*$. 
The result obviously holds for $g=0,\ n=1$ since $V(0,1)=\C$.

\begin{corollary}\label{cor:2}
If $n\ge 1$ and $Y$ is an Oka manifold which is not
simply connected, there is a holomorphic map $V(g,n)\to Y$ 
which is nonconstant on every fibre. In particular, $V(g,n)$ admits a nowhere 
vanishing holomorphic function which is nonconstant on every fibre.
\end{corollary}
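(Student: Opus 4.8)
The plan is to reduce the statement to a homotopy-theoretic construction and then invoke the Oka principle. The second assertion follows from the first by taking $Y=\C^*$, which is an Oka manifold (it is a complex Lie group) and is not simply connected: a holomorphic map $V(g,n)\to\C^*$ that is nonconstant on every fibre is precisely a nowhere vanishing holomorphic function with that property. So it suffices to prove the first assertion, and we may assume $Y$ is connected. The case $(g,n)=(0,1)$ is immediate, since $V(0,1)=\C$ consists of a single fibre and a connected Oka manifold that is not simply connected has positive dimension, hence admits a nonconstant holomorphic map from $\C$ (restrict a dominating holomorphic map $\C^{\dim Y}\to Y$ to a complex line through the origin; see \cite{Forstneric2017E}). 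From now on assume $n\ge 1$ and $(g,n)\ne(0,1)$.

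First I would fix the topological picture. For $t\in T(g,n)$ the fibre $M_t$ is an $n$-punctured compact Riemann surface of genus $g$, hence a noncompact surface whose homotopy type, as recalled above, is a finite bouquet of circles that is nonempty precisely because $n\ge 1$ and $(g,n)\ne(0,1)$. Since $T(g,n)$ is contractible, $\pi$ is smoothly trivial and each inclusion $\iota_t\colon M_t\hookrightarrow V(g,n)$ is a homotopy equivalence; in particular $\pi_1(V(g,n))$ is a nontrivial free group. As $Y$ is not simply connected, pick $c\in\pi_1(Y,y_0)$ with $c\ne 1$, and choose a continuous map $f_0\colon V(g,n)\to Y$ inducing a nontrivial homomorphism on fundamental groups; concretely, using a homotopy equivalence from $V(g,n)$ to a bouquet of circles, let $f_0$ restrict to a loop representing $c$ on one of the circles and be constant on the others. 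Any such $f_0$ is not null-homotopic (a freely null-homotopic map induces the trivial homomorphism on $\pi_1$, by conjugating with the tracking path of the basepoint). Since each $\iota_t$ is a homotopy equivalence, the restriction $f_0|_{M_t}=f_0\circ\iota_t$ also induces a nontrivial homomorphism on $\pi_1$, hence is not null-homotopic, for \emph{every} $t\in T(g,n)$; as constant maps are null-homotopic, $f_0$ is nonconstant on every fibre of $\pi$.

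Next I would apply the Oka principle. By Theorem~\ref{th:main} the manifold $V(g,n)$ is Stein, and $Y$ is Oka, so by the basic Oka principle \cite[Theorem~5.4.4]{Forstneric2017E} the continuous map $f_0$ is homotopic to a holomorphic map $f\colon V(g,n)\to Y$. Being homotopic to $f_0$, the map $f$ is not null-homotopic, and therefore --- each $\iota_t$ being a homotopy equivalence --- neither is $f|_{M_t}=f\circ\iota_t$; thus $f$ is nonconstant on every fibre of $\pi$. This proves the first statement, and specialising to $Y=\C^*$ as above produces the nowhere vanishing holomorphic function of the second statement.

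The analytic content is carried entirely by Theorem~\ref{th:main} (that $V(g,n)$ is Stein) together with standard Oka theory, so I expect the only delicate point to be the topological one, and even that is largely prepared by the discussion preceding the corollary: one must arrange $f_0$ so that its restriction to \emph{every} fibre, not merely one, is homotopically nontrivial. The contractibility of $T(g,n)$, which makes $\pi$ smoothly trivial and all the fibre inclusions $\iota_t$ homotopy equivalences, is exactly what reduces this to the single statement that $f_0$ itself is not null-homotopic. I do not anticipate any further obstacle.
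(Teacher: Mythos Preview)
Your proof is correct and follows essentially the same approach as the paper: the paper's argument is the brief text surrounding the corollary, which notes that $V(g,n)$ has the homotopy type of a nontrivial bouquet of circles, that homotopically nontrivial continuous maps to a non–simply connected $Y$ therefore exist, and then invokes Corollary~\ref{cor:1} (the Oka principle over the Stein manifold $V(g,n)$) together with the fact that each fibre inclusion is a homotopy equivalence. You have simply fleshed out the details, including the special case $(g,n)=(0,1)$ and the explicit reason why a homotopically nontrivial map must be nonconstant on every fibre.
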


Another way to obtain fibrewise nonconstant holomorphic maps
from $V(g,n)$ to an Oka manifold is to inductively use the Oka property with 
approximation on compact holomorphically convex subsets
of the Stein manifold $V(g,n)$; see \cite[Theorem 5.4.4]{Forstneric2017E}. 
The Oka principle can be used to obtain many further properties 
of holomorphic universal families $V(g,n)\to Y$ in any Oka manifold.

%
%
The fact that $V(g,n)$ for $n\ge 1$ is homotopy equivalent to a 
bouquet of circles implies that every complex vector bundle on 
$V(g,n)$ is topologically trivial. Since $V(g,n)$ is Stein, 
the Oka--Grauert principle (see Grauert \cite{Grauert1958MA} 
or \cite[Theorem 3.2.1]{Forstneric2017E}) implies the following. 

\begin{proposition}\label{prop:VB}
Every holomorphic vector bundle on $V(g,n)$ for $n\ge 1$
is holomorphically trivial.  
\end{proposition}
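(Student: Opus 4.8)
The plan is to combine two facts that are already available: every complex vector bundle on $V(g,n)$ is topologically trivial, and the Oka--Grauert principle holds on the Stein manifold $V(g,n)$.

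First I would pin down the homotopy type of $V(g,n)$. For $n\ge 1$ every fibre of $\pi:V(g,n)\to T(g,n)$ is a compact Riemann surface with at least one point removed, hence a noncompact Riemann surface, hence homotopy equivalent to a finite wedge of circles. Since $T(g,n)$ is contractible, $\pi$ is smoothly trivial and the inclusion of any fibre into $V(g,n)$ is a homotopy equivalence; this was already recorded in the paragraph preceding Corollary \ref{cor:1}. Thus $V(g,n)$ has the homotopy type of a finite bouquet of circles, in particular of a CW complex of dimension $\le 1$.

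Next I would observe that on a space homotopy equivalent to a $1$-dimensional CW complex every complex vector bundle is topologically trivial. A rank-$r$ complex vector bundle on a paracompact space $X$ is classified by an element of $[X,BU(r)]$, and $BU(r)$ is simply connected because $U(r)$ is path-connected; hence $[X,BU(r)]$ is a singleton when $\dim X\le 1$. Equivalently, over a connected $1$-complex the structure group $\mathrm{GL}_r(\C)$, being connected, reduces to the trivial group, so the bundle admits a global frame. Finally I would invoke the Oka--Grauert principle: by Theorem \ref{th:main} the manifold $V(g,n)$ is Stein, so Grauert's Oka principle for holomorphic fibre bundles with complex Lie structure group (Grauert \cite{Grauert1958MA}; see also \cite[Theorem 3.2.1]{Forstneric2017E}) shows that for each $r$ the forgetful map from isomorphism classes of holomorphic rank-$r$ vector bundles to isomorphism classes of topological rank-$r$ complex vector bundles on $V(g,n)$ is a bijection. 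Together with the previous step this gives that every holomorphic vector bundle on $V(g,n)$ is holomorphically trivial.

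I do not expect a genuine obstacle here: the analytic content is carried entirely by Theorem \ref{th:main} and the topological content by the homotopy type, both already in hand. The only point that merits a sentence is the claim about the homotopy type of $V(g,n)$, which follows from contractibility of $T(g,n)$ and smooth triviality of $\pi$ exactly as in the derivation of Corollaries \ref{cor:1} and \ref{cor:2}.
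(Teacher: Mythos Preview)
Your argument is correct and follows exactly the same route as the paper: the homotopy type of $V(g,n)$ is a bouquet of circles, so every complex vector bundle is topologically trivial, and then the Oka--Grauert principle on the Stein manifold $V(g,n)$ upgrades this to holomorphic triviality. The only difference is that you spell out the classifying-space justification for topological triviality, which the paper leaves implicit.
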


This fails in the algebraic category, even on a single fibre. In fact, the 
tangent bundle of a finitely punctured compact Riemann surface is not 
algebraically trivial in general.

\begin{corollary}\label{cor:VFF}
Assume that $g\ge 0$ and $n\ge 1$.
\begin{enumerate}[\rm (a)] 
\item There exists a nowhere vanishing holomorphic 
vector field $\xi$ on $V(g,n)$ which is tangent to the fibres
of the projection $\pi:V(g,n)\to T(g,n)$, that is, $d\pi(\xi)=0$.
\item
With $\xi$ as in (a), there exists a holomorphic $1$-form $\theta$ on $V(g,n)$
satisfying $\langle \theta,\xi\rangle=1$. In particular, $\theta$ 
is nowhere vanishing on the tangent bundle to any fibre of $\pi$.
\end{enumerate}
\end{corollary}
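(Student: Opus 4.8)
The plan is to deduce both parts directly from Theorem~\ref{th:main} (that $V:=V(g,n)$ is Stein) and Proposition~\ref{prop:VB} (holomorphic triviality of vector bundles on $V$), applied to the relative tangent bundle of $\pi$. Since $\pi:V\to T:=T(g,n)$ is a holomorphic submersion with one-dimensional fibres, the vertical tangent bundle $T_{V/T}:=\ker(d\pi)\subset TV$ is a holomorphic line subbundle of $TV$; passing to the open subset $V=\wh V(g,n)\setminus\bigcup_i s_i(T)$ affects neither submersivity of $\pi$ nor this subbundle. By Proposition~\ref{prop:VB} the line bundle $T_{V/T}$ is holomorphically trivial, hence carries a nowhere vanishing holomorphic section $\xi$, which is precisely a zero-free holomorphic vector field on $V$ with $d\pi(\xi)=0$. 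This gives (a).

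For (b), fix such a $\xi$; it trivialises the line subbundle $L:=T_{V/T}=\mathcal{O}_V\cdot\xi\subset TV$, with quotient vector bundle $Q:=TV/L$. The goal is $\theta\in H^0(V,T^*V)$ with $\langle\theta,\xi\rangle=1$. Dualising the short exact sequence $0\to L\to TV\to Q\to 0$ of holomorphic vector bundles yields an exact sequence $0\to Q^*\to T^*V\xrightarrow{\rho}L^*\to 0$ in which, after identifying $L^*\cong\mathcal{O}_V$ through the dual frame $\xi^*\mapsto 1$, the arrow $\rho$ sends $\theta$ to $\langle\theta,\xi\rangle$. In the associated long exact cohomology sequence the map $H^0(V,T^*V)\to H^0(V,L^*)\cong\mathcal{O}(V)$ is surjective because $H^1(V,Q^*)=0$, and this vanishing holds by Cartan's Theorem~B, since $V$ is Stein (Theorem~\ref{th:main}) and $Q^*$ is coherent. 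Hence some holomorphic $1$-form $\theta$ maps to the constant function $1$, i.e.\ $\langle\theta,\xi\rangle\equiv1$. (Equivalently, $H^1(V,\mathcal{H}om(Q,L))=0$ by Theorem~B, so $0\to L\to TV\to Q\to 0$ splits; composing a retraction $TV\to L$ with the isomorphism $L\cong\mathcal{O}_V$ produces such a $\theta$.)

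The concluding assertion is then immediate: for $t\in T$ the tangent bundle of the fibre $M_t:=\pi^{-1}(t)$ equals $L|_{M_t}$, spanned by $\xi|_{M_t}$, so a tangent vector $v=f\,\xi|_{M_t}$ of $M_t$ satisfies $\theta(v)=f$, which vanishes only where $v=0$; equivalently the pullback of $\theta$ to $M_t$ is a nowhere vanishing holomorphic $1$-form (reproving triviality of the canonical bundle of $M_t$). There is no serious obstacle here — the content is wholly carried by Theorem~\ref{th:main} and Proposition~\ref{prop:VB}. The only points deserving a remark are that $T_{V/T}$ is a genuine holomorphic line bundle on $V$ and that Cartan's Theorem~B is applicable, both of which are already in place.
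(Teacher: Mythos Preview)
Your proof is correct and follows essentially the same route as the paper: part (a) is identical (apply Proposition~\ref{prop:VB} to the vertical line bundle $\ker d\pi$), and for part (b) the paper uses Cartan's Theorem~B to split the short exact sequence $0\to\ker d\pi\to TV\to TV/\ker d\pi\to 0$ and then defines $\theta$ by $\langle\theta,\xi\rangle=1$ and $\theta|_{\sigma(H)}=0$, which is exactly the parenthetical alternative you give. Your primary argument via the dual sequence and surjectivity of $H^0(T^*V)\to H^0(L^*)$ is just a cosmetic repackaging of the same splitting.
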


\begin{proof}
Part (a) follows by applying Proposition \ref{prop:VB} to the 
holomorphic line bundle $\ker d\pi \to V(g,n)$, the vertical tangent 
bundle of the holomorphic submersion $\pi:V(g,n)\to T(g,n)$. 
To see (b), consider the following short exact sequence of 
vector bundles on $V(g,n)$:
\[
	0\lra \ker d\pi \longhookrightarrow T V(g,n) 
	\stackrel{\alpha}{\lra} H:=TV(g,n)/\ker\pi \lra 0.
\]
Here, $TV(g,n)$ denote the tangent bundle of $V(g,n)$.
By Cartan's Theorem B the sequence splits, i.e. there is a
holomorphic vector bundle injection $\sigma:H\hookrightarrow TV(g,n)$
such that $\alpha\circ\sigma=\Id_H$. Hence, 
$T V(g,n)= \ker d\pi \oplus \sigma(H)=\C\xi \oplus \sigma(H)$, 
where $\xi$ is as in part (a). 
The unique holomorphic $1$-form $\theta$ on $V(g,n)$ 
satisfying $\langle \theta,\xi\rangle =1$ and $\xi=0$ on $\sigma(H)$
clearly satisfies part (b). 
\end{proof}

By the Gunning--Narasimhan theorem \cite{GunningNarasimhan1967},
every open Riemann surface $M$ admits a holomorphic immersion
$f:M\to\C$. In view of Corollary \ref{cor:VFF} (b), the following 
is a natural question.

%
%
\begin{problem}\label{prob:immersion}
Let $g\ge 1$ and $n\ge 1$. Is there a holomorphic function 
$f:V(g,n)\to\C$ whose restriction to every fibre of $V(g,n)$ is an 
immersion?
\end{problem}

By \cite[Theorem 1]{Forstneric2003AM} there exists a 
holomorphic function $f:V(g,n)\to\C$ without critical points. 
The problem is to find $f$ such that 
$\ker df_z$ is transverse to $\ker d\pi_z$ at every point $z\in V(g,n)$.
Note that \cite[Corollary 8.3]{Forstneric2024Runge}
gives a smooth function $f:V(g,n)\to\C$ whose restriction to every 
fibre is a holomorphic immersion.
Problem \ref{prob:immersion} is related to the question whether a 
holomorphic 1-form $\theta$ in Corollary \ref{cor:VFF} (b) 
can be made exact on every fibre of $\pi$ by multiplying it with 
a suitably chosen nowhere vanishing holomorphic function on $V(g,n)$.
However, this is not the only problem. Since the Teichm\"uller submersion 
$\pi:V(g,n)\to T(g,n)$ does not admit a holomorphic section when $g\ge 3$
(see Hubbard \cite{Hubbard1972,Hubbard1976} and 
Earle and Kra \cite[p.\ 50]{EarleKra1976} for a precise description
of sections of $\wh V(g,n)$ and $V(g,n)$), 
%
%
there is no natural way of choosing the initial point for computing 
the fibrewise integrals of $\theta$, which would give a holomorphic family
of immersions on the fibres.

Another interesting question is whether the Riemann surfaces 
in the Teichm\"uller family $V(g,n)$, $n\ge 1$, admit a representation 
as a family of conformal minimal surfaces in $\R^k$, $k\ge 3$, 
whose $(1,0)$-derivatives depends holomorphically 
on $t\in T(g,n)$. For background, 
see \cite{Osserman1986} and \cite{AlarconForstnericLopez2021}.
By \cite[Corollary 8.6]{Forstneric2024Runge} the answer is affirmative
with continuous or smooth dependence on the parameter.
It remains an open problem whether minimal surfaces in such families
can be chosen to be complete and with finite total curvature.
Each single surface in the family can be made such by 
\cite{AlarconLopez2022APDE,AlarconLarusson2025Crelle}.

It is natural to wonder whether the results of this paper extend
to infinite dimensional Teichm\"uller families.
The analogous problem with continuous or smooth dependence
of the complex structures and the holomorphic maps on the parameter
was studied in \cite{Forstneric2024Runge} 
for very general families of open Riemann surfaces $\{(M,J_t)\}_{t\in T}$,
where $M$ is a smooth open surface 
and $J_t$ are complex structures on $M$ of some local H\"older class 
depending continuously or smoothly on the parameter $t$
in a topological space $T$. The Riemann surfaces in such families 
need not belong to the same Teichm\"uller space.
For example, a punctured Riemann surface can be a member
of a family in which the punctures develop into boundary curves,  
and vice versa, boundary curves may degenerate to punctures. 
Under mild assumptions on the family $\{J_t\}_{t\in T}$, 
$J_t$-holomorphic functions on $(M,J_t)$ 
satisfy the Runge approximation theorem with continuous
or smooth dependence on $t\in T$  
\cite[Theorem 1.1]{Forstneric2024Runge}. 
For a class of parameter spaces including finite CW complexes 
we also have the Oka principle for continuous or smooth
families of holomorphic maps from $(M,J_t)$ to any Oka manifold
\cite[Theorem 1.6]{Forstneric2024Runge}.
These results were extended in \cite{ForstnericSigurdardottir2025} 
to maps from tame families of Stein manifolds of arbitrary dimension
to Oka manifolds.

%
%
\section{Proof of Theorem \ref{th:main}}\label{sec:proof}

In view of the description of the Teichm\"uller submersion 
$\pi:V(g,n)\to T(g,n)$ \eqref{eq:Vgn}, Theorem \ref{th:main} is an 
immediate consequence of the following result
with arbitrary Stein manifold as the base.

%
%
\begin{theorem}\label{th:Stein}
Assume that $X$ is a Stein manifold, 
$Z$ is a complex manifold with $\dim Z=\dim X+1$, 
$\pi:Z\to X$ is a surjective proper holomorphic
submersion with connected fibres, 
and $s_1,\ldots,s_n:X\to Z$ $(n\ge 1)$ are holomorphic sections
with pairwise disjoint images. Then, the domain 
$\Omega=Z\setminus \bigcup_{i=1}^n s_i(X)$ is Stein.
\end{theorem}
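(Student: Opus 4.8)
The plan is to construct a strongly plurisubharmonic exhaustion function on $\Omega = Z \setminus \bigcup_{i=1}^n s_i(X)$ and invoke Grauert's theorem \cite{Grauert1958AM}. The function will have two ingredients: one controlling the ``horizontal'' direction by pulling back from the Stein base $X$, and one controlling the ``vertical'' direction and forcing the exhaustion to blow up along the removed sections $s_i(X)$. For the horizontal part, fix a smooth strongly plurisubharmonic exhaustion $\rho$ on $X$; then $\rho \circ \pi$ is plurisubharmonic on $Z$ (hence on $\Omega$), exhausts $\Omega$ in the base directions, and is strongly plurisubharmonic in the directions transverse to the fibres. It remains to add a correction term that (i) is strongly plurisubharmonic along each fibre and (ii) tends to $+\infty$ as one approaches any section $s_i(X)$, while being bounded on sets of the form $\pi^{-1}(K)$ away from the sections so that the total function is still an exhaustion of $\Omega$.

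The key step is the construction of this vertical term. Since $\pi$ is a proper submersion with one-dimensional fibres and the sections have pairwise disjoint images, I would first produce, for each $i$, a holomorphic function $h_i$ defined on a neighbourhood $U_i$ of $s_i(X)$ in $Z$ that vanishes exactly to first order along $s_i(X)$ and is a fibrewise coordinate there: concretely, after shrinking, one can use that $\ker d\pi|_{s_i(X)}$ is a line bundle over $X\cong s_i(X)$, and over the Stein manifold $X$ this is holomorphically trivial, so one can choose a vertical vector field and flow, or more simply use Cartan's Theorem B to find $h_i \in \Oscr(U_i)$ with $h_i = 0$ on $s_i(X)$ and $dh_i$ nonvanishing on $\ker d\pi$ along $s_i(X)$; shrinking $U_i$ makes $h_i$ a submersion onto a disc on each fibre. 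Then $-\log|h_i|^2$ is plurisubharmonic on $U_i \setminus s_i(X)$ and $\to +\infty$ at $s_i(X)$. Using a cutoff $\chi_i$ supported in $U_i$ and equal to $1$ near $s_i(X)$, the function $-\chi_i \log|h_i|^2$ extends by $0$ to all of $\Omega$; its failure to be plurisubharmonic is confined to the compact (in the base: $\pi$-proper) region where $d\chi_i \ne 0$, where it is bounded below, and this defect is absorbed by adding a large multiple $C\,\rho\circ\pi$ after composing $\rho$ with a rapidly increasing convex function. Finally, to get strong plurisubharmonicity along the fibres everywhere on $\Omega$, not just near the sections, I add a globally fibrewise strongly plurisubharmonic term: since each fibre $\Omega_t$ is an open Riemann surface it carries such functions, and one can manufacture a global one on $\Omega$ by a Stein-theoretic argument on the total space — e.g. embed a neighbourhood, or use that $\Omega$ admits enough holomorphic functions transverse to the fibres from the local models $h_i$ patched via Theorem B — or, alternatively, defer this point entirely by noting that the second, sheaf-theoretic proof via Theorem \ref{th:A-embedding} already yields a direct construction; in the present elementary argument the cleanest route is to observe that $|h_i|^2$ itself (not its log) is fibrewise strongly plurisubharmonic near $s_i(X)$ and to combine a convex increasing function of $-\log|h_i|^2$ with $\rho\circ\pi$ so that the resulting function is simultaneously exhausting and strongly plurisubharmonic on $\Omega$.

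Assembling: set $\varphi = \sum_{i=1}^n \big(-\chi_i \log|h_i|^2\big) + F(\rho\circ\pi)$ for a suitably fast-growing smooth convex increasing $F:\R\to\R$. One checks that $\varphi$ is an exhaustion of $\Omega$ (it blows up at the removed sections by the log terms and in the base directions by $F\circ\rho\circ\pi$, and $\pi$-properness of $Z\to X$ controls the fibre directions away from the sections), and that $i\partial\bar\partial\varphi > 0$: on the locus where the cutoffs are locally constant the log terms are plurisubharmonic and the transverse directions are handled by $F(\rho\circ\pi)$ with $F$ chosen large, together with the fibrewise positivity coming from the $|h_i|^2$-type terms; on the compact transition region the bounded negative contribution of the $d\chi_i$ terms is dominated by $F''$ large. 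Then $\Omega$ is Stein by Grauert.

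I expect the main obstacle to be \emph{the fibre-direction strong plurisubharmonicity on the part of $\Omega$ far from all the sections}: $\rho\circ\pi$ contributes nothing there along fibres, and the log terms are supported only near the $s_i(X)$. One must genuinely produce a plurisubharmonic function on $Z$ (or on $\Omega$) whose Levi form is positive definite along every fibre globally. The honest way to do this is via Grauert's coherence theorem for direct images: choose a relatively ample line bundle data fibrewise or, more directly, use that $\pi:Z\to X$ proper with $X$ Stein implies (by Grauert) that $Z$ is holomorphically convex and carries a function strongly plurisubharmonic along fibres coming from a fibrewise embedding into a projective bundle; restricting this to $\Omega$ and adding it to $\varphi$ closes the argument. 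This is precisely the point where the paper's second proof, using coherence of direct images, enters, and in writing the elementary proof one should either cite that input or carry out this fibrewise-embedding construction explicitly.
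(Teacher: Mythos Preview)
You have correctly identified the crux: obtaining \emph{fibrewise} strong plurisubharmonicity on the part of $\Omega$ away from all the sections, where $\rho\circ\pi$ is degenerate along the fibres and your cut-off log terms vanish. Your proposed remedy, however, is to import Grauert's coherence of direct images and a fibrewise projective embedding---that is precisely the paper's \emph{second} proof, not the elementary one. So as written your ``elementary'' argument is not self-contained; the gap you flag is real and is not closed by the ingredients you have assembled.

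The paper fills this gap without coherence, and the missing idea is the following. First reduce to $n=1$ (removing a principal hypersurface from a Stein manifold keeps it Stein, so one can peel off the sections one at a time). Set $H=s(X)$; by Siu's theorem $H$ has a Stein neighbourhood $U\subset Z$, and then $U\setminus H$ is Stein, so it carries a strongly plurisubharmonic exhaustion $\phi$. Now work locally over the base: near $x_0\in X$, trivialise $Z$ smoothly as $X_0\times M$ with $M$ the compact fibre. Because $\phi\to+\infty$ at $H$, one can choose small discs $D\Subset D'\subset M$ around the puncture with $\inf_{bD}\phi(x_0,\cdot)>\max_{bD'}\phi(x_0,\cdot)$. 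The compact bordered Riemann surface $M\setminus D$ admits a smooth strongly subharmonic function $u_0$ with $u_0<\phi(x_0,\cdot)$ on $bD$ and $u_0>\phi(x_0,\cdot)$ on $bD'$; both inequalities and the strong subharmonicity of $u_0$ persist for $x$ near $x_0$. Gluing $u_0$ and $\phi$ by the regularised maximum yields a smooth function on the whole punctured fibre that is strongly subharmonic \emph{everywhere on the fibre} and agrees with $\phi$ near the puncture. Patching these local pieces with a partition of unity $\{\chi_j\}$ depending only on $x$ (so fibrewise convex combinations of fibrewise strongly subharmonic functions stay fibrewise strongly subharmonic) produces a global $\rho$ on $Z\setminus H$ that is strongly subharmonic along every fibre and equals $\phi$ on a neighbourhood of $H$. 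Finally $\rho+\tau\circ\pi$, with $\tau$ a sufficiently fast-growing strongly plurisubharmonic exhaustion of $X$, is the desired strongly plurisubharmonic exhaustion of $Z\setminus H$. The point you were missing is that the fibrewise positivity far from $H$ comes from an \emph{ad hoc} strongly subharmonic function on the compact bordered surface $M\setminus D$, glued to $\phi$ by a max; no direct-image machinery is needed.

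A smaller issue: your construction of a holomorphic defining function $h_i$ for $s_i(X)$ assumes the normal line bundle $\ker d\pi|_{s_i(X)}$ is holomorphically trivial. Over a general Stein base $X$ this can fail (line bundles on Stein manifolds are classified by $H^2(X,\Z)$), so in general $h_i$ must be a section of a line bundle with a Hermitian metric, and $-\log\|h_i\|^2$ then acquires a bounded curvature term. This is harmless for the final estimate but should be said; alternatively, the Siu-neighbourhood route above sidesteps the issue entirely.
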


The assumption that the sections
$s_1,\ldots,s_n$ have pairwise disjoint images is inessential and 
is imposed only for convenience of the proof; see Theorem \ref{th:Stein2}
for a more general result. The conclusion fails if the fibres
of $\pi$ have complex dimension $>1$, or if the sections
$s_i$ are not holomorphic. In such a case, the 
domain $\Omega$ in the theorem fails to be 
locally pseudoconvex at some boundary point $s_i(x)$, $x\in X$.
Note that Stein complements of smooth complex hypersurfaces
in compact K\"ahler manifolds have recently been 
studied by H\"oring and Peternell \cite{HoringPeternell2024}
where the reader can find references to earlier works.
In our case, $Z$ is not compact unless $X$ is a point.

\begin{proof}[Proof of Theorem \ref{th:Stein}]
Let $\pi:Z\to X$ be as in the theorem. 
Note that every fibre $Z_x=\pi^{-1}(x)$, $x\in X$, is a compact 
connected Riemann surface, and the fibres are 
diffeomorphic but not necessarily biholomorphic
to one another. Hence, $\{Z_x\}_{x\in X}$ is  
a holomorphic family of compact Riemann surfaces and 
$\Omega_x=Z_x\setminus \bigcup_{i=1}^n s_i(x)$ $(x\in X)$
is a holomorphic family of $n$-punctured Riemann surfaces.
Each $H_i=s_i(X)$ is a closed complex hypersurface in $Z$
whose ideal sheaf is principal, that is, locally near each 
point of $H_i$ it is generated by a single holomorphic function. 

Recall the following result (see Grauert and Remmert 
\cite[Theorem 5, p.\ 129]{GrauertRemmert1979}):
If $Z$ is a Stein space and $H$ is a closed complex analytic hypersurface
in $Z$ (of pure codimension one) 
whose ideal sheaf is a principal ideal sheaf, then $Z\setminus H$ 
is also Stein. If $Z$ is nonsingular 
then the ideal sheaf of any closed complex subvariety 
of pure codimension one in $Z$ is a 
principal ideal sheaf (see \cite[Chap.\ A.3.5]{GrauertRemmert1979}). 
Hence, it suffices to prove the theorem in the case $n=1$,
that is, to show that the complement $Z\setminus s(X)$ of a 
holomorphic section $s:X\to Z$ is a Stein manifold.

By a theorem of Siu \cite{Siu1976}, the Stein hypersurface $H=s(X)$
has a basis of open Stein neighbourhoods $U$ in $Z$. 
Since $U\setminus H$ is a Stein manifold 
\cite[Theorem 5, p.\ 129]{GrauertRemmert1979}, 
it admits a strongly plurisubharmonic exhaustion function
$\phi:U\setminus H\to \R_+$. To prove the theorem, we shall 
construct a strongly plurisubharmonic exhaustion function 
$Z\setminus H\to \R_+$; a theorem of Grauert \cite{Grauert1958AM}
will then imply that $Z\setminus H$ is Stein.

Fix a point $x_0\in X$ and set $z_0=s(x_0)\in H\subset Z$. 
Since $\pi:Z\to X$ is a holomorphic submersion with compact
one dimensional fibres, it is a smooth fibre bundle whose fibre
$M$ is a compact smooth surface. 
In particular, there is a neighbourhood $X_0\subset X$ of $x_0$
such that the restricted bundle $Z|X_0=\pi^{-1}(X_0) \to X_0$ can 
be smoothly identified with the trivial bundle 
$X_0\times M\to X_0$. In this identification,
$z_0=(x_0,p_0)$ with $p_0\in M$. Since $\phi$ tends to $+\infty$ 
along $H$, there are small smoothly bounded open discs 
$D\Subset D'\Subset M$ with $p_0\in D$ such that 
\begin{equation}\label{eq:ineq1}
	\inf_{p \in bD} \phi(x_0,p) > 
	\max_{p \in bD'} \phi(x_0,p).
\end{equation}
The set $O=M\setminus D$ is a compact bordered 
Riemann surface with smooth boundary $bO=bD$, endowed
with the complex structure inherited by the identification 
$M\cong Z_{x_0}=\pi^{-1}(x_0)$. Note that $bD'$ is contained
in the interior of $O$. It follows from \eqref{eq:ineq1} and standard
results that there is a smooth strongly subharmonic function 
$u_0: O \to\R_+$ such that 
\[
	\text{$u_0 < \phi(x_0,\cdotp)$ on 
	$bO=bD$ and $u_0>\phi(x_0,\cdotp)$ on $bD'$.}
\]
Shrinking the neighbourhood $X_0\subset X$ of $x_0$ if necessary, 
the following conditions hold for every $x\in X_0$, where we use
the smooth fibre bundle isomorphism $Z|X_0\cong X_0\times M$:
\begin{enumerate}[\rm (a)]
\item $s(x) \in D$, 
\item the function $u(x,\cdotp)=u_0$ is strongly subharmonic on $O$
in the complex structure on $Z_x\cong M$,
\item $u(x,\cdotp) < \phi(x,\cdotp)$ on $bO=bD$, and 
\item $u(x,\cdotp) > \phi(x,\cdotp)$ on $bD'$.
\end{enumerate}
Condition (b) holds since being strongly subharmonic on 
a compact subset is a stable property under small smooth 
deformations of the complex structure. 
We define a function $\rho_{0}: (X_0\times M) \setminus H \to\R_+$ 
by taking for every $x\in X_0$:
\[
	\rho_{0}(x,p)=
	\begin{cases} 
		\phi(x,p), &  p\in D\setminus \{s(x)\}; \\
		\max\{\phi(x,p),u(x,p)\}, &  p\in D'\setminus D; \\
		u(x,p), & p \in M\setminus D'.
	\end{cases}
\]
Note that $\rho_0$ is well defined, piecewise smooth, strongly subharmonic 
on each fibre $Z_x\setminus\{s(x)\}$ $(x\in X_0)$, and it agrees with $\phi$ 
on $(X_0\times D)\setminus H$. In particular, $\rho_0$ is exhausting
along $s(X_0)$. By using the regularised maximum in the definition of 
$\rho_{0}$ (see \cite[Eq. (3.1), p.\ 69]{Forstneric2017E}), we may assume 
that $\rho_{0}$ is smooth and enjoys the other stated properties.

This construction gives an open locally finite cover $\{X_j\}_{j=1}^\infty$
of $X$ with smooth fibre bundle trivialisations 
$Z|X_j\cong X_j\times M$, discs $D_j\subset M$ such that 
$s(x)\in D_j$ for all $x\in X_j$, and smooth functions 
$\rho_j:(Z|X_j)\setminus H  \to \R_+$ 
such that $\rho_j$ is strongly subharmonic on each fibre 
$Z_x\setminus \{s(x)\}$ $(x\in X_j)$ 
and it agrees with $\phi$ on $(X_j\times D_j)\setminus H$.  
Let $\{\chi_j\}_j$ be a smooth partition of unity on $X$
with compact supports $\supp(\chi_j) \subset X_j$ for each $j$. Set 
\[
	\rho=\sum_{j=1}^\infty \chi_j \rho_j:Z\setminus H\to \R_+. 
\]
By the construction, the restriction of $\rho$ to each fibre
$Z_x\setminus \{s(x)\}$ $(x\in X)$ is strongly subharmonic, 
and there is an open neighbourhood $U_0\subset U\subset Z$ of $H$ 
such that $\rho=\phi$ holds on $U_0\setminus H$. In particular, $\rho$ 
is strongly plurisubharmonic on $U_0\setminus H$. 
Note that for every compact set $K\subset X$, 
the set $\pi^{-1}(K) \setminus U_0\subset Z\setminus H$ is compact.
Hence, choosing a strongly plurisubharmonic exhaustion 
function $\tau:X\to\R_+$ whose Levi form $dd^c \tau$ grows fast enough, 
we can ensure that 
\[
	\rho+\tau\circ \pi: Z\setminus H\to\R_+
\]
is a strongly plurisubharmonic exhaustion function, thus proving
the theorem. Indeed, denoting 
by $J$ the almost complex structure operator on $Z$ and 
by $d^c$ the conjugate differential defined by 
\[
	(d^c \rho)(z,\xi)=-d\rho(z,J \xi)\ \
	\text{for $z\in Z$ and $\xi\in T_z Z$},
\] 
the function $\rho$ is strongly plurisubharmonic at $z\in Z$ if and only if 
\[
	(dd^c\rho)(z,\xi\wedge J\xi)>0\ \ 
	\text{for every tangent vector $0\ne \xi\in T_z Z$}.
\]
(Up to a positive factor, this equals the Laplace of $\rho$ on the 
2-plane $\mathrm{span}(\xi,J\xi) \subset T_z Z$.) 
Since $\rho$ is strongly subharmonic 
on every fibre  $Z_x \setminus \{s(x)\}$, $x\in X$, we have that 
\[
	(dd^c \rho)(z,\xi\wedge J\xi)>0 \ \  
	\text{if $z\in Z\setminus H$ and $0\ne \xi \in \ker d\pi_z$.}
\] 
Hence, the eigenvectors of $(dd^c \rho)(z,\cdotp)$ associated
to nonpositive eigenvalues lie in a closed cone $C_z\subset T_z Z$ 
which intersects $\ker d\pi_z$ only in the origin.
It follows that if $\tau:X\to \R$ is such that 
$dd^c \tau>0$ is sufficiently big on $T_x X$ where $x=\pi(z)$,  
then $dd^c\rho + dd^c (\tau\circ\pi) >0$ on $T_z Z$.
Furthermore, the estimates are uniform on the compact set 
$\pi^{-1}(K) \setminus U_0$, where $U_0\subset Z$ is a neighbourhood
of $H$ such that $dd^c\rho>0$ on $U_0\setminus H$. 
To see that $\tau$ can be chosen
such that $dd^c \tau$ grows as fast as desired, note that 
if $h:\R\to\R$ is a $\Cscr^2$ function then for each point $x\in X$ and 
vector $\xi\in T_xX $ we have that
\[ 
	dd^c(h\circ \tau)(x,\xi\wedge J\xi) = 
	h'(\tau(x))\, (dd^c \tau)(x,\xi\wedge J\xi) + 
	h''(\tau(x))\, \big(|d\tau(x,\xi)|^2  + |d\tau(x,J\xi)|^2\big).
\]
Hence, if $\tau$ is a strongly plurisubharmonic exhaustion function on $X$ 
and the function $h:\R\to\R$ is chosen such that $h''\ge 0$ and 
$h'$ grows sufficiently fast, then $dd^c (h\circ \tau)$ also grows 
as fast as desired.
\end{proof}

A minor modification of the proof of Theorem \ref{th:Stein}
gives the following more general result.

%
%
\begin{theorem}\label{th:Stein2}
Assume that $X$ is a connected Stein manifold, 
$Z$ is a complex manifold, $\pi:Z\to X$ is a surjective proper 
holomorphic submersion with purely one dimensional fibres, 
and $H$ is a closed complex subvariety
of $Z$ of pure codimension one which intersects every connected 
component of each fibre $Z_x=\pi^{-1}(x),\ x\in X$, but it does not 
contain any such component. Then, $Z\setminus H$ is Stein.
\end{theorem}

\begin{proof} 
Since $\pi:Z\to X$ is proper and $H$ is closed in $Z$,
$\pi|_H:H \to X$ is a proper holomorphic map. 
The conditions imply that $H$ intersects
every fibre in a finite set of points, 
so $\pi|_H:H \to X$ is a finite holomorphic map. 
The proper mapping theorem of Remmert \cite{Remmert1957}
(see also \cite[p.\ 29]{Chirka1989}) 
implies that $\pi(H)$ is a closed complex subvariety of $X$ 
of pure dimension $\dim X$, hence $\pi(H)=X$ since $X$ is connected.  
By \cite[Theorem 1 (d), p.\ 125]{GrauertRemmert1979}, 
the subvariety $H$ is Stein. A theorem of Siu \cite{Siu1976} implies 
that $H$ has a basis of open Stein neighbourhoods $U\subset Z$. 
By the argument in the proof of Theorem \ref{th:Stein},
the ideal sheaf of $H$ is a principal ideal sheaf, so 
the manifold $U\setminus H$ is Stein for any Stein neighbourhood 
$U\subset Z$ of $H$ \cite[Theorem 5, p.\ 129]{GrauertRemmert1979}.
The proof can now be completed by a similar argument as 
in the proof of Theorem \ref{th:Stein}, 
and we leave the details to the reader.
\end{proof}

\begin{remark}
Theorem \ref{th:Stein2} still holds if 
$X$ is a Stein space, $Z$ is a complex space,
$H\subset Z$ is a closed complex hypersurface whose ideal 
sheaf is principal, and the other conditions on the submersion 
$\pi:Z\to X$ and $H$ are satisfied.
\end{remark}

%
%
%
%
\section{Fibrewise algebraic functions on $V(g,n)$} \label{sec:algebraic}

In this section, we show that the universal family $V(g,n)$ 
of $n$-punctured compact Riemann surfaces of genus $g$ 
for $n\ge 1$ has a large algebra of holomorphic functions which are 
algebraic on every fibre of the Teichm\"uller projection 
$\pi:V(g,n)\to T(g,n)$. Indeed, such functions are dense in the
algebra of holomorphic functions in the compact-open topology;
see Theorem \ref{th:OkaWeil}. Furthermore, the universal family 
$V(g,n)$ is affine; see Corollary \ref{cor:Vgn-affine}.

We shall consider the more general situation  
in Theorem \ref{th:Stein2}. Thus, assume that $X$ is a connected 
Stein manifold, $Z$ is a complex manifold, 
$\pi:Z\to X$ is a surjective proper holomorphic submersion with
connected one dimensional fibres, $H$ is a closed complex subvariety
of $Z$ of pure codimension one which does not contain any fibre 
of $\pi$, and $\Omega=Z\setminus H$.
Every fibre $\Omega_x=\Omega\cap\pi^{-1}(x)$ $(x\in X)$ 
is an affine complex curve.
Let $\Ascr(\Omega)$ denote the subalgebra of
$\Oscr(\Omega)$ consisting of functions which are algebraic 
on $\Omega_x$ for every $x\in X$. Note that 
$\Ascr(\Omega)$ contains the subset $\{f\circ\pi: f\in \Oscr(X)\}$.
We have the following result; see also the global version in 
Theorem \ref{th:A-embedding-global}.

%
%
\begin{theorem}\label{th:A-embedding}
(Assumptions as above.) Given a relatively compact domain 
$U\Subset X$, there exist finitely many functions 
$f_1,\ldots,f_N\in \Ascr(\Omega)$ such that the map
$F:\Omega\to X \times \C^N$ given by 
\begin{equation}\label{eq:F}
	F(z) = (\pi(z), f_1(z),\ldots, f_N(z)),\quad z\in\Omega 
\end{equation} 
induces a proper embedding 
$\Omega_U :=\Omega\cap \pi^{-1}(U) \to U\times \C^N$. 
\end{theorem}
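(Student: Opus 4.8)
The plan is to construct the $f_i$ as quotients $s_i/\sigma$, where $\sigma$ is the canonical section of a high tensor power $L=\Ocal_Z(H)^{\otimes k}$ of the line bundle of the divisor $H$, and where the $s_i$ are finitely many global holomorphic sections of $L$ selected, by means of Grauert's direct image theorem together with base change, so that their restrictions to each fibre near $\overline U$ span a very ample complete linear system.

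First I would set up the fibrewise picture. By Ehresmann's theorem $\pi\colon Z\to X$ is a smooth fibre bundle, so every fibre $Z_x$ is a compact Riemann surface of one fixed genus $\gamma$; as in the proof of Theorem \ref{th:Stein2} the map $\pi|_H$ is a finite branched cover with $\pi(H)=X$, so $H_x:=H\cap Z_x$ is an effective divisor on $Z_x$ of degree $d(x)\ge 1$, bounded on the compact set $\overline U$. Also as there, the ideal sheaf of $H$ is principal, so $\Ocal_Z(H)$ is a genuine holomorphic line bundle carrying a section $\sigma_0$ with divisor $H$. Fix $k\ge 2\gamma+1$, put $L=\Ocal_Z(H)^{\otimes k}$ and $\sigma=\sigma_0^{\otimes k}\in H^0(Z,L)$, so that $\sigma$ has divisor $kH$ and $L|_{Z_x}\cong\Ocal_{Z_x}(kH_x)$ has degree $k\,d(x)\ge 2\gamma+1$. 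The elementary point is that for any holomorphic section $s$ of $L$ over an open set $W\subset Z$ the quotient $s/\sigma$ is meromorphic with polar set inside $H$, hence holomorphic on $W\setminus H$, and on each fibre it is a ratio of two sections of $\Ocal_{Z_x}(kH_x)$, i.e. a rational function with poles only along $H_x$; thus $s/\sigma$ lies in $\Ascr$ on $W\setminus H$, and global sections of $\pi_*L$ give functions in $\Ascr(\Omega)$.

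Next I would extract the sections. Since $\deg(L|_{Z_x})\ge 2\gamma+1$ for every $x\in X$, the bundle $L|_{Z_x}$ is very ample (in particular base-point-free) and $H^1(Z_x,L|_{Z_x})=0$. By Grauert's coherence theorem $\Fcal:=\pi_*L$ is a coherent analytic sheaf on the Stein manifold $X$, so by Cartan's Theorem A it is generated by its global sections, and since $\overline U$ is compact, finitely many global sections $s_1,\dots,s_N\in H^0(Z,L)$ already generate $\Fcal$ on a neighbourhood of $\overline U$. Because the fibres of $R^1\pi_*L$ vanish there, the base change theorem (using that the fibres of $\pi$ are one-dimensional) gives a natural isomorphism $\Fcal\otimes\C(x)\cong H^0(Z_x,L|_{Z_x})$ for $x$ near $\overline U$; combined with Nakayama's lemma this means that $s_1|_{Z_x},\dots,s_N|_{Z_x}$ span $H^0(Z_x,L|_{Z_x})$ for every such $x$. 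Set $f_i:=s_i/\sigma\in\Ascr(\Omega)$, so that $F=(\pi,f_1,\dots,f_N)\colon\Omega\to X\times\C^N$; it remains to check that $F|_{\Omega_U}$ is a proper embedding into $U\times\C^N$.

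Finally, on a fixed fibre $\Omega_x$ with $x$ near $\overline U$, the spanning sections $s_i|_{Z_x}$ realise $[s_1:\cdots:s_N]$ as the projective embedding $Z_x\hookrightarrow\P^{N-1}$ attached to the very ample complete system $|kH_x|$; on $\Omega_x=Z_x\setminus H_x$ the section $\sigma|_{Z_x}$ is nowhere zero and lies in the span of the $s_i|_{Z_x}$, so $(f_1,\dots,f_N)|_{\Omega_x}$ is the affine dehomogenisation of this embedding and is itself an injective immersion of $\Omega_x$ into $\C^N$. Points of $\Omega\cap\pi^{-1}(\overline U)$ in distinct fibres are separated by $\pi$, so $F$ is an injective immersion there. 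For properness, fix a compact set $K_0\subset U$ and $R>0$; the set $F^{-1}(K_0\times\overline{\B}(0,R))$ is a closed subset of the compact set $\pi^{-1}(K_0)\subset Z$, and it cannot accumulate at any $z_\infty\in H\cap\pi^{-1}(K_0)$, because base-point-freeness supplies an index $i$ with $s_i(z_\infty)\ne 0$ while $\sigma(z_\infty)=0$, forcing $|f_i|\to\infty$ near $z_\infty$ and contradicting $|f_i|\le R$; hence $F^{-1}(K_0\times\overline{\B}(0,R))$ is compact in $\Omega_U$, so $F|_{\Omega_U}$ is proper, and a proper injective immersion is an embedding. I expect the main work to be the sheaf-theoretic step — choosing $k$ and producing finitely many global sections of $L$ whose fibrewise restrictions span $H^0(Z_x,L|_{Z_x})$ uniformly for $x$ near $\overline U$, i.e. the combined use of Grauert coherence, Cartan's Theorem A over the Stein base, cohomology-and-base-change, and Nakayama; the embedding and properness then follow from soft arguments.
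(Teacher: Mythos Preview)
Your proof is correct and follows the same overall strategy as the paper: produce functions in $\Ascr(\Omega)$ as quotients $s/\sigma$ with $\sigma$ the canonical section of a tensor power of $\Ocal_Z(H)$, use Grauert's coherence theorem for $\pi_*L$ together with Cartan's Theorem~A on the Stein base to obtain finitely many global sections, and then verify embedding and properness fibrewise via very ampleness.

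The one executional difference worth noting is in how the exponent is chosen. You observe (via Ehresmann) that all fibres have a common genus $\gamma$, fix a single $k\ge 2\gamma+1$, and then use vanishing of $H^1$ on fibres plus cohomology-and-base-change to conclude that your finitely many global sections of $L=\Ocal_Z(H)^{\otimes k}$ restrict to span the full space $H^0(Z_x,L|_{Z_x})$ for every $x$ near $\overline U$. The paper instead allows the exponent $d=d(x)$ to depend on $x$: for each $x$ it picks $d$ large enough that $L_x^{\otimes d}$ is very ample and the evaluation map $\pi_*(\Lscr^{\otimes d})_x\to H^0(Z_x,\Lscr_x^{\otimes d})$ is surjective, extends the fibrewise embedding sections to global ones via Cartan~A, and then covers $\overline U$ by finitely many such neighbourhoods, assembling the resulting functions. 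Your uniform choice of $k$ is cleaner and eliminates the covering step; the paper's pointwise approach is slightly more agnostic about the fibre geometry. Both yield the stated result only over a relatively compact $U$, since in either case only finitely many generators of the coherent sheaf are available over a compact set.
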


The proof of Theorem \ref{th:A-embedding} does not rely on the fact, 
proved in Theorem \ref{th:Stein2}, that $\Omega=Z\setminus H$ 
is a Stein manifold. It clearly implies that
functions in $\Ascr(\Omega)$ separate
points and establish holomorphic convexity, so it gives another 
proof that $\Omega$ is Stein. This applies in particular 
to $\Omega=V(g,n)$ for $n\ge 1$. See also 
Theorem \ref{th:OkaWeil} and Corollary \ref{cor:OkaWeil}
for more precise results on the algebra $\Ascr(\Omega)$.

A map $F$ satisfying the conclusion of Theorem \ref{th:A-embedding} 
is called a {\em proper embedding over $U$}. 
The same result holds if $X$ is a Stein space, $Z$ is a complex space, 
and the other conditions on the submersion $\pi:Z\to X$ and the 
hypersurface $H\subset Z$ hold; in particular, the ideal sheaf 
of $H$ is principal.

\begin{proof}
Let $L \to Z$ denote the holomorphic line bundle determined 
by the divisor $H\subset Z$, and let $\sigma_0:Z\to L$ be a holomorphic
section whose zero divisor equals $H$. (The existence
of such a section is tautological from the construction of $L$.)
Since the fibre $Z_x=\pi^{-1}(x)$ is a projective 
curve for every $x\in X$, Serre's GAGA principle \cite{Serre1955AIF} 
implies that the restricted line bundle $L_x:=L|Z_x \to Z_x$
is algebraic, and every holomorphic section of $L_x$ over $Z_x$
is algebraic. It follows that for any holomorphic section 
$\sigma:Z\to L$, the quotient
$f=\sigma/\sigma_0$ is a function in $\Ascr(\Omega)$.
The restriction of $f$ to $\Omega_x$ is algebraic and has an effective
pole at every end of $\Omega_x$ (that is, a point of 
$H_x=H\cap Z_x$) at which $\sigma$ does not vanish. 
Note that the restricted line bundle $L_x=L|Z_x \to Z_x$ 
is associated to the effective 
divisor supported on $H_x$ in which the multiplicity of 
a point $z\in H_x$ is the intersection number of $H$ with $Z_x$ at $z$. 
In particular, $L_x$ is an ample line bundle on the compact Riemann 
surface $Z_x$ whose degree $\deg L_x$ is independent of $x\in X$. 
Hence, some tensor power $L_x^{\otimes d} \to Z_x$ 
is very ample, so it admits finitely many holomorphic sections 
$\sigma_{1,x},\ldots,\sigma_{N,x}:Z_x\to L_x^{\otimes d}$ such that the map
\begin{equation}\label{eq:Kodaira}
	Z_x\ni z \longmapsto 
	\big[\sigma_{0,x}^{d}(z):\sigma_{1,x}(z):\cdots:\sigma_{N,x}(z)\big] 
	\in \CP^N
\end{equation}
is a holomorphic embedding. 
Here, $\sigma_0^d$ denotes the $d$-th power of $\sigma_0$, 
a section of the line bundle $L^{\otimes d}$. 
Note that $\sigma_{i,x}/\sigma_{0,x}^{d}$ is a regular 
algebraic function on $\Omega_x$ for every $i=1,\ldots,N$, 
and the map 
\begin{equation}\label{eq:Kodaira2}
	Z_x\setminus H_x =\Omega_x \ni z \longmapsto  
	\frac{1}{\sigma_{0,x}^{d}} (\sigma_{1,x},\ldots,\sigma_{N,x}) \in\C^N
\end{equation}
is a proper algebraic embedding.

To prove the theorem, we will show that for $d>0$ big enough,  
every holomorphic section $\sigma_x:Z_x\to L_x^{\otimes d}$ 
extends to a holomorphic section $\sigma:Z\to L^{\otimes d}$. 
Assume for a moment that this holds true.
Extending all section $\sigma_{i,x}$ in \eqref{eq:Kodaira} 
for $i=1,\ldots,N$ to $Z$, the map in \eqref{eq:Kodaira} 
is an embedding for any base point in an open neighbourhood 
$U_1\subset X$ of $x$. Hence, the map $F$ of the form \eqref{eq:F}
with the components $f_i=\sigma_i/\sigma_0^d\in \Ascr(\Omega)$ 
is a proper embedding over $U_1$.
Indeed, for any pair of points $x\in U_1$ and $z\in H_x$, at least one
of the sections $\sigma_{i,x}$ $(i=1,\ldots,N)$ does not vanish
at $z$. Since $\sigma_0(z)=0$, the function
$f_i=\sigma_i/\sigma_0^d$ has an effective
pole at $z$, which implies properness. 
Repeating the same construction at other points 
$x\in \overline U$ and assembling the resulting functions as components 
of a map \eqref{eq:F} yields a proper embedding over $U$
as in the theorem. 

It remains to explain why every 
holomorphic section $\sigma_x:Z_x\to L_x^{\otimes d}$ 
for $d>0$ big enough extends to a holomorphic section 
$\sigma:Z\to L^{\otimes d}$. Let $\Lscr=\Oscr_Z(H)\to Z$ denote the
sheaf of holomorphic sections of $L\to Z$;
note that $\Lscr$ is a coherent (locally free) $\Oscr_Z$-analytic sheaf.
Since the holomorphic projection $\pi:Z\to X$ is
surjective and proper, the image sheaf $\pi_*\Lscr$ 
(see \cite[p.\ 227]{GrauertRemmert1984}) 
is a coherent analytic sheaf on $X$ by Grauert's direct image theorem 
\cite[p.\ 207]{GrauertRemmert1984}. A section 
of $\pi_*\Lscr$ over an open subset $V\subset X$ is the same thing
as a section of $\Lscr$ over $Z_V:=\pi^{-1}(V)$.
The argument given (in the algebraic case) in 
\cite[\href{https://stacks.math.columbia.edu/tag/0D2M}{Tag 0D2M}]{stacks}
shows that for $d_0=d_0(x)>0$ big enough, the evaluation map
\begin{equation}\label{eq:eval} 
	E_x: \pi_*(\Lscr^{\otimes d})_x \to H^0(Z_x,\Lscr_x^{\otimes d})
\end{equation} 
is surjective for $d\ge d_0$. 
This means that every holomorphic section 
$\sigma_x:Z_x\to L_x^{\otimes d}$ of the line bundle
$L_x^{\otimes d}\to Z_x$ extends to a holomorphic section $\sigma$ 
of $L^{\otimes d}|{Z_V}\to Z_V$ for an open neighbourhood 
$V$ of $x$. Denote by $[\sigma]_x \in \pi_*(\Lscr^{\otimes d})_x$ 
the associated element of the stalk $\pi_*(\Lscr^{\otimes d})_x$,
so $E_x([\sigma]_x)=\sigma_x$. Since the sheaf $\pi_*\Lscr^{\otimes d}$
is $\Oscr_X$-coherent, Cartan's Theorem A 
\cite[p.\ 124]{GrauertRemmert1979} shows that 
$\pi_*(\Lscr^{\otimes d})_x$ is generated as an $\Oscr_{X,x}$-module 
by global sections of $\pi_*(\Lscr^{\otimes d})$. Hence, there are sections 
$\xi_1,\ldots,\xi_m \in H^0(X,\pi_*(\Lscr^{\otimes d}))$ 
and germs of holomorphic functions $g_1,\ldots,g_m\in \Oscr_{X,x}$
such that 
\begin{equation}\label{eq:sigmax}
	[\sigma]_x = \sum_{j=1}^m g_j [\xi_j]_x \in \pi_*(\Lscr^{\otimes d})_x.
\end{equation}
Since $X$ is Stein, there are functions $\tilde g_j\in \Oscr(X)$ such that 
$\tilde g_j(x)=g_j(x)$ for $j=1,\ldots,m$. Then, 
$
	\tilde \sigma := \sum_{j=1}^m \tilde g_j \xi_j \in 
	H^0(X,\pi_*(\Lscr^{\otimes d}))
$
is a section whose value at $x$ equals 
\[
	E_x(\tilde \sigma) = \sum_{j=1}^m \tilde g_j(x) E_x (\xi_j) = 
	\sum_{j=1}^m g_j(x) E_x (\xi_j) = E_x([\sigma]_x) = \sigma_x
\] 
(see \eqref{eq:sigmax}). This completes the proof.
\end{proof}

Recall (see e.g.\ \cite{Hormander1990})
that a compact set $K$ in a complex manifold $\Omega$
is said to be $\Oscr(\Omega)$-convex if and only if it equals its
$\Oscr(\Omega)$-convex hull 
\[
	\wh K_{\Oscr(\Omega)} =
	\{z\in \Omega: |f(z)|\le \sup_K|f|\ \text{for all}\ f\in \Oscr(\Omega)\}.
\]
Assume now that $\Omega=Z\setminus H$ is as in 
Theorem \ref{th:A-embedding}.
We define the hull $\wh K_{\Ascr(\Omega)}$ in a similar way by
using functions $f\in \Ascr(\Omega)$. Since 
$\Ascr(\Omega)\subset \Oscr(\Omega)$, we have
$\wh K_{\Oscr(\Omega)} \subset \wh K_{\Ascr(\Omega)}$. 
Theorem \ref{th:A-embedding} clearly implies that the hull
$\wh K_{\Ascr(\Omega)}$ of any compact set $K\subset \Omega$
is compact. Furthermore, functions in $\Ascr(\Omega)$ separate points 
of $\Omega$, so they satisfy the axioms of a Stein manifold.
The following theorem shows that the algebra $\Ascr(\Omega)$ 
is dense in $\Oscr(\Omega)$ in the compact-open topology.

%
%
\begin{theorem}\label{th:OkaWeil}
Let $\pi:Z\to X$, $H\subset Z$, and 
$\Omega = Z\setminus H$ be as in Theorem \ref{th:A-embedding}. 
Given a compact $\Oscr(\Omega)$-convex set 
$K\subset \Omega$, every holomorphic function
on a neighbourhood of $K$ is a uniform limit on $K$
of functions in $\Ascr(\Omega)$.
This holds in particular for the Teichm\"uller 
family $\Omega=V(g,n)$ with $n\ge 1$.
\end{theorem}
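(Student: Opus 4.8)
The plan is to combine the proper embedding over relatively compact subsets from Theorem \ref{th:A-embedding} with the classical Oka--Weil theorem in a complex Euclidean space. First I would fix a compact $\Oscr(X)$-convex set $K\subset\Omega$ and a holomorphic function $h$ on an open neighbourhood of $K$. Choose a relatively compact domain $U\Subset X$ with $\pi(K)\subset U$; since $K\subset\Omega_U=\Omega\cap\pi^{-1}(U)$ is compact, Theorem \ref{th:A-embedding} furnishes finitely many $f_1,\ldots,f_n\in\Ascr(\Omega)$ such that $F=(\pi,f_1,\ldots,f_n):\Omega_U\to U\times\C^n$ is a proper holomorphic embedding onto a closed complex submanifold $\Sigma=F(\Omega_U)$ of $U\times\C^n$.

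The second step is to verify that the compact set $F(K)\subset\Sigma\subset U\times\C^n$ is $\Oscr(U\times\C^n)$-convex, or at least polynomially convex after a further shrinking. The key point here is that $\pi(K)$ is $\Oscr(X)$-convex in $X$, hence $\Oscr(U)$-convex after passing to a slightly smaller $U$ (or one may simply embed $X$ properly into some $\C^m$ and use that $K$ is $\Oscr(\C^m\times\C^n)$-convex under $F$ composed with that embedding, which is the cleanest route). Concretely: realise $X$ as a closed submanifold of some $\C^m$ via a proper holomorphic embedding $\iota$; then $G:=(\iota\circ\pi,f_1,\ldots,f_n):\Omega_U\to\C^{m+n}$ maps a neighbourhood of $K$ biholomorphically onto a locally closed submanifold, and $G(K)$ is $\Oscr(\C^{m+n})$-convex because $K$ is $\Oscr(X)$-convex, which translates to $\iota(\pi(K))$ being $\Oscr(\C^m)$-convex, while the extra coordinates $f_i$ only shrink the hull. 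After shrinking the neighbourhood of $K$ on which $h$ is defined, $h\circ G^{-1}$ is holomorphic on a neighbourhood of the $\Oscr(\C^{m+n})$-convex compact set $G(K)$; by the Oka--Weil theorem it is a uniform limit on $G(K)$ of polynomials in the coordinate functions. Pulling back, $h$ is a uniform limit on $K$ of polynomials in $\pi^*(\iota_1),\ldots,\pi^*(\iota_m),f_1,\ldots,f_n$. Each $\pi^*(\iota_j)$ is constant on fibres, hence trivially algebraic on every fibre, and each $f_i$ lies in $\Ascr(\Omega)$; since $\Ascr(\Omega)$ is a $\C$-algebra, every such polynomial lies in $\Ascr(\Omega)$, and the approximation follows.

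The main obstacle I anticipate is the convexity bookkeeping in the middle step: one must ensure that the embedding supplied by Theorem \ref{th:A-embedding} can be combined with a proper embedding of the base so that $\Oscr(X)$-convexity of $K$ genuinely propagates to polynomial (or $\Oscr$-) convexity of the image, and that properness of $F$ over $U$ is used to guarantee $\Sigma$ is closed in $U\times\C^n$ so that holomorphic functions near $G(K)$ extend to neighbourhoods in the ambient space. This is routine but needs care: the correct statement is that $\iota(\pi(K))$ is $\Oscr(\C^m)$-convex (equivalently, contained in a polynomially convex compact), the fibre directions are controlled by the $f_i$ being globally defined on $\Omega$, and the Oka--Weil theorem is applied on the ambient Euclidean space rather than on the submanifold. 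The final sentence of the theorem, the application to $V(g,n)$, is then immediate from Theorem \ref{th:Stein} (which gives that $V(g,n)$ is Stein, so $\Oscr(V(g,n))$-convex compacts exist in abundance) together with the identification of $\pi:V(g,n)\to T(g,n)$ with the setup of Theorem \ref{th:A-embedding}, noting that $\Oscr(T(g,n))$-convexity is the relevant hypothesis since all holomorphic functions on $\wh V(g,n)$ come from the base.
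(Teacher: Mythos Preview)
Your strategy matches the paper's: use the fibrewise algebraic embedding $F$ from Theorem \ref{th:A-embedding} over a relatively compact $U$, push $h$ forward to the ambient space, apply Oka--Weil, and note that the approximants pull back into $\Ascr(\Omega)$. Your extra step of embedding $X$ properly in $\C^m$ is cosmetic; the paper works directly on $X\times\C^n$ and truncates the Taylor series in the $\C^n$-variables, which has the same effect.

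The gap is in the convexity step. First, the hypothesis ``$\Oscr(X)$-convex'' in the statement should be read as ``$\Oscr(\Omega)$-convex'' (this is what the subsequent Corollary needs); your reading that only $\pi(K)$ is $\Oscr(X)$-convex is too weak for the theorem to hold at all (take $X$ a point, $\Omega=\C$, $K$ the unit circle, $h=1/z$), and it does not follow from $K$ being $\Oscr(\Omega)$-convex anyway. Second, you must take $U$ \emph{Runge} in $X$, not merely relatively compact. The paper's chain is: $K$ $\Oscr(\Omega)$-convex $\Rightarrow$ $K$ $\Oscr(\Omega_U)$-convex (more functions, smaller hull) $\Rightarrow$ $F(K)$ $\Oscr(F(\Omega_U))$-convex $\Rightarrow$ $F(K)$ $\Oscr(U\times\C^n)$-convex because $F(\Omega_U)$ is \emph{closed} in $U\times\C^n$ by properness $\Rightarrow$ $F(K)$ $\Oscr(X\times\C^n)$-convex because $U$ is Runge in $X$. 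After your embedding $\iota$ this yields polynomial convexity of $G(K)$. Your sentence ``the extra coordinates $f_i$ only shrink the hull'' conflates the hull of $K$ inside $\Omega$ with the polynomial hull of $G(K)$ in $\C^{m+n}$ and does not substitute for this argument; in particular, nothing you wrote rules out points of $\widehat{G(K)}$ lying over $\iota(\pi(K))$ but outside $G(\Omega_U)$.
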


\begin{proof}
Choose a relatively compact Runge domain $U\Subset X$ 
such that $\pi(K)\subset U$. Let 
$F:\Omega\to X \times \C^N$ be a map of class
$\Ascr(\Omega)$, furnished by Theorem \ref{th:A-embedding}
(see \eqref{eq:F}), such that $F:\Omega_U \to U\times \C^N$ 
is a proper embedding. Its image $F(\Omega_U)$ is then 
a closed complex submanifold of the Stein manifold $U\times \C^N$.
It follows that the set $F(K)$ is holomorphically
convex in $U\times \C^N$, and hence in $X\times \C^N$ since
$U$ is Runge in $X$. Let $f$ be a holomorphic function on a 
Stein neighbourhood $V\subset \Omega$ of $K$. We may assume that 
$\pi(V)\subset U$. Then, $\tilde f=f\circ F^{-1}:F(V)\to\C$ is a 
holomorphic function on the open subset $F(V)$
in the closed complex submanifold $F(\Omega_U)$ of $U\times \C^N$.
By the Cartan--Oka extension theorem, $\tilde f$ extends to a 
holomorphic function on a neighbourhood of $F(K)$ in $U\times \C^N$. 
(This can also be seen by precomposing $\tilde f$ by a holomorphic 
retraction from a neighbourhood of $F(\Omega_U)$ in $U\times \C^N$
onto $F(\Omega_U)$, given by Docquier and Grauert
\cite{DocquierGrauert1960}; 
see also \cite[Theorem 3.3.3, p.\ 74]{Forstneric2017E}.) 
By the Oka--Weil theorem, $\tilde f$ can be approximated 
uniformly on $F(K)$ by functions $g\in \Oscr(X\times \C^N)$. 
Every such function is of the form
$
	g(x,w)=\sum_{\alpha\in \Z_+^N} g_\alpha(x) w^\alpha,
$ 
where $w=(w_1,\ldots,w_N)$ are coordinates on $\C^N$ and
$g_\alpha\in \Oscr(X)$ for every $\alpha\in \Z_+^N$. By cutting
off the power series at finite levels gives polynomials in $w\in\C^N$
depending holomorphically on $x\in X$ which 
approximate $g$ on $F(K)$. Precomposing 
such functions with $F$ gives the desired approximation
of $f$ by functions in $\Ascr(\Omega)$.
\end{proof}

The following is an immediate corollary to Theorem \ref{th:OkaWeil}.

\begin{corollary}\label{cor:OkaWeil}
For any compact set $K\subset \Omega$ we have that 
$\wh K_{\Oscr(\Omega)} = \wh K_{\Ascr(\Omega)}$.
\end{corollary}

%
%
\medskip
\noindent {\bf Added in the revision.}
It was pointed out by a referee that Theorem \ref{th:A-embedding}
has the following global version, whose proof was proposed 
(in the special case when $Z=\wh V(g,n)$ and $X=T(g,n)$
for $n\ge 1$) by Yiran Lin in a private communication.

\begin{theorem}\label{th:A-embedding-global}
Assume that $X$ is a connected Stein manifold, $Z$ is a complex manifold, 
$\pi:Z\to X$ is a surjective proper holomorphic submersion with
connected one dimensional fibres, and $H$ is a closed complex subvariety
of $Z$ of pure codimension one which does not contain any fibre of $\pi$.
Then there exist an integer $N\in\N$ and a holomorphic map 
$f: \Omega=Z\setminus H \to \C^N$ which restricts to a 
proper algebraic embedding $\Omega_x\hra \C^N$ on every fibre 
$\Omega_x=\Omega\cap \pi^{-1}(x)$, $x\in X$.
\end{theorem}

\begin{proof}
Let $g$ denote the genus of the Riemann surface $Z_x=\pi^{-1}(x)$
and $n\ge 1$ the degree of the divisor $[H]_{Z_x}$; 
these numbers are independent of $x\in X$. 
Choose $k\in\N$ such that $kn>2g$ and consider the divisor 
$D=k[H]$ on $Z$. By Riemann--Roch we have for any $x\in X$ that 
\[
		h^0(Z_x, \Oscr_{Z}(D)|_{Z_x}) = kn + 1-g > g+1,
		\qquad
		h^1(Z_x, \Oscr_{Z}(D)|_{Z_x}) =0.
\]
(As usual, $h^0$ and $h^1$ denote the dimensions of the 
cohomology groups $H^0$ and $H^1$, respectively.) 
By \cite[Theorem, p.\ 211]{GrauertRemmert1984} it follows that
the direct image $\Ecal=\pi_* \Oscr_Z(D)$ is a locally free sheaf on $X$,
that is, the sheaf of germs of holomorphic 
sections of a holomorphic vector bundle $E\to X$. 
Since $X$ is Stein, $\Ecal$ is generated by finitely many global sections
$\xi_1,\ldots,\xi_N$. (Indeed, by an extension of 
Cartan's Theorem A (see Forster \cite[Corollary 4.4]{Forster1967MZ} 
or Kripke \cite{Kripke1969}), every holomorphic vector bundle 
on a Stein manifold admits finitely many holomorphic sections 
which span the fibre over each point.) 
The restrictions of the sections $\xi_1,\ldots,\xi_N$ 
to any fibre $Z_x$ span the vector space $H^0(Z_x,\Oscr_{Z}(D)|_{Z_x})$. 
Since $kn > 2g$, the divisor 
$\Oscr_{Z}(D)|_{Z_x}$ is very ample for every $x\in X$
(see \cite[IV. Corollary 3.2]{Hartshorne1977}). Hence, any basis of 
$H^0(Z_x,\Oscr_{Z}(D)|_{Z_x})$ consisting of $m$ elements
defines an embedding $\Omega_x=Z_x\setminus H_x\hra\C^m$. 
Thus, the map 
$
	f = (f_1,\ldots,f_N): Z\setminus H=\Omega \to \C^N
$
of class $\Ascr(\Omega)$, defined by the sections $\xi_1,\ldots,\xi_N$
of $\Ecal$ as in \eqref{eq:Kodaira}--\eqref{eq:Kodaira2}, restricts to a proper 
algebraic embedding on each fibre $\Omega_x$.
\end{proof}

\begin{corollary}\label{cor:Vgn-affine}
The manifold $\Omega$ in Theorem \ref{th:A-embedding-global} 
admits a proper holomorphic embedding $\Omega\hra \C^m$
for some $m\in\N$ which restricts to an algebraic map on every fibre
$\Omega_x=\pi^{-1}(x),\ x\in X$. This holds in particular for the 
universal Teichm\"uller family $V(g,n)$ for any $g\ge 0$ and $n\ge 1$.
\end{corollary}

\begin{proof}
Let $f=(f_1,\ldots,f_N):\Omega\to\C^N$ be as in Theorem 
\ref{th:A-embedding-global}. Choose a proper holomorphic embedding $h:X\hra \C^{N'}$ for some $N'\in\N$. The map 
\[
	\Omega \ni z \longmapsto \big(h\circ \pi(z),f(z)\big) 
	\in \C^{N'+N}
\]
is then a proper embedding of class $\Ascr(\Omega)$. 
\end{proof}

The analogous statement holds in the algebraic setting  when the base
$X$ is an affine algebraic variety.
The following is a special case of a result due to 
R.\ van Dobben de Bruyn \cite[Lemma]{deBruyn2018}.

\begin{theorem}\label{th:RDB} 
Assume that $X$ is an affine algebraic variety, $Z$ is an algebraic
variety, $\pi:Z\to X$ is a proper algebraic submersion with 
one dimensional fibres, and $s_1,\cdots,s_n$ for $n\ge 1$ 
are algebraic sections. If all fibres 
$\Omega_x=Z_x\setminus \bigcup_{i=1}^n s_i(x)$ $(x\in X)$ 
are affine (equivalently, no connected component of 
$\Omega_x$ is compact)  
then there exists an algebraic morphism 
$f:\Omega=Z\setminus \bigcup_{i=1}^n s_i(X)\to \C^N$ 
for some $N \in\N$ which is a proper embedding on 
every fibre $\Omega_x$.
\end{theorem}


%
%
%
\section{A relative Oka principle for fibrewise algebraic maps}
\label{sec:AOP}

In this section, we prove a relative Oka principle for holomorphic 
fibrewise algebraic maps from any manifold $\Omega=Z\setminus H$ 
as in Theorem \ref{th:A-embedding}
to a class of complex algebraic manifolds which includes 
all flexible manifolds in the sense of Arzhantsev et al.\ 
\cite{ArzhantsevFlennerKalimanKutzschebauchZaidenberg2013DMJ};
see Theorem \ref{th:AOP}. This holds in particular for the 
Teichm\"uller family $\Omega=V(g,n)$ for any $g\ge 0$ and $n\ge 1$.
In order to state this result, we recall the following notions; see 
Gromov \cite[0.5, p.\ 8.5.5]{Gromov1989}, 
\cite[Definition 5.6.13]{Forstneric2017E}, or
\cite[Definition 3.1]{Forstneric2023Indag}. 

A {\em dominating holomorphic spray} on a complex manifold $Y$ 
is a holomorphic map $s:E\to Y$ from the total space of a 
holomorphic vector bundle $E\to Y$ such that 
\begin{equation}\label{eq:sprayonY}
	s(0_y)=y\ \ \text{and}\ \ ds_{0_y}(E_y) = T_y Y
	\ \ \text{for every $y\in Y$.}
\end{equation}
Here, $0_y$ denotes the origin of the fibre $E_y$ of $E$ 
over the point $y\in Y$,
and $E_y$ is considered as a $\C$-linear subspace of the tangent space
$T_{0_y}E$. (Identifying $Y$ with the zero section of $E$, 
we have a natural direct sum decomposition $TE|Y \cong TY\oplus E$.)  
The spray is algebraic if the vector bundle $E\to Y$ 
and the map $s:E\to Y$ are algebraic.

A complex manifold $Y$ is said to be {\em elliptic} if it admits a dominating 
holomorphic spray, and to be {\em special elliptic} if such a spray exists 
on a trivial bundle $E=Y\times \C^m$ for some $m\in \N$. 
An algebraic manifold $Y$ is {\em algebraically (special) elliptic} 
if these conditions hold with algebraic sprays.
Every compact special elliptic manifold is complex homogeneous
\cite[Proposition 6.2]{Forstneric2019MMJ}. 
On the other hand, there exist many 
noncompact nonhomogeneous special elliptic manifolds. 
An important source of examples are {\em flexible manifolds}.
An algebraic manifold $Y$ is said to be flexible 
\cite{ArzhantsevFlennerKalimanKutzschebauchZaidenberg2013DMJ}
if its tangent bundle $TY$ is pointwise spanned by finitely many LND's,
that is, algebraic vector fields $V_j$ with 
complete algebraic flows $\phi_t^j$ $(t\in\C,\ j=1,\ldots,m)$.
The map $s:Y\times \C^m\to Y$ defined by 
\[
	s(y,t_1,\ldots,t_m)=\phi^1_{t_1}\circ \cdots \phi^m_{t_m}(y),
	\quad y\in Y,\ (t_1,\ldots,t_m)\in\C^m
\]
is then a dominating algebraic spray on $Y$, so every flexible manifold
is algebraically special elliptic. 
Similarly, a complex manifold is holomorphically flexible if 
its tangent bundle is pointwise spanned by finitely many
$\C$-complete holomorphic vector fields. By the same argument,
every such manifold is special elliptic.
References to examples of flexible manifolds can be found in 
\cite[p.\ 394]{Forstneric2023Indag}.

It was proved in \cite[Theorem 3.1]{Forstneric2006AJM}
that a holomorphic map $X\to Y$ from an affine algebraic variety 
$X$ to an algebraically elliptic manifolds $Y$,
which is homotopic to an algebraic map, is a
limit of algebraic maps. (See also 
\cite[Theorem 6.15.1]{Forstneric2017E} and 
\cite[Theorem 6.4]{Forstneric2023Indag}.)
The following is an analogue of this result 
for fibrewise algebraic holomorphic maps to 
algebraically special elliptic manifold.

%
%
%
\begin{theorem}\label{th:AOP}
Assume that $\Omega$ is as in Theorem \ref{th:A-embedding} 
and $Y$ is an algebraically special elliptic manifold.
Given a map $f:\Omega\to Y$ of class $\Ascr(\Omega,Y)$ 
(i.e., $f$ is holomorphic and fibrewise algebraic), 
a compact $\Oscr(\Omega)$-convex set $K\subset\Omega$, and 
a homotopy of holomorphic maps $f_t:U\to Y$ $(t\in[0,1)$ 
on an open neighbourhood $U$ of $K$ with $f_0=f|_U$, 
there are holomorphic maps $F:\Omega \times \C \to Y$ 
such that $F(\cdotp,0)=f$ and $F(\cdot,t)\in \Ascr(\Omega,Y)$
approximates $f_t$ as closely as desired uniformly on $K$ 
and uniformly in $t\in[0,1]$.
In particular, a holomorphic map $\Omega \to Y$ that is homotopic
to a map in $\Ascr(\Omega,Y)$ is a limit of maps in 
$\Ascr(\Omega,Y)$ uniformly on compacts.
This holds in particular for the Teichm\"uller family
$\Omega=V(g,n)$ for any $g\ge 0$ and $n\ge 1$. 
\end{theorem}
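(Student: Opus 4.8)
The plan is to reduce the relative Oka principle for fibrewise algebraic maps to the known relative Oka principle for holomorphic maps of Stein manifolds into elliptic (special elliptic) targets, using the proper embedding over relatively compact domains furnished by Theorem~\ref{th:A-embedding}. First I would fix a relatively compact Runge domain $U\Subset X$ with $\pi(K)\subset U$ and, by Theorem~\ref{th:A-embedding}, a map $F=(\pi,f_1,\ldots,f_N):\Omega\to X\times\C^N$ of class $\Ascr(\Omega)$ which restricts to a proper embedding $\Omega_U\hookrightarrow U\times\C^N$ onto a closed complex submanifold $W:=F(\Omega_U)$. The homotopy $f_t:U'\to Y$ on a neighbourhood $U'$ of $K$ transports via $F^{-1}$ to a homotopy of holomorphic maps $\tilde f_t:F(U')\to Y$ defined on a neighbourhood of the $\Oscr(U\times\C^N)$-convex set $F(K)$ in the closed submanifold $W$; since $Y$ is special elliptic it is in particular Oka and elliptic, so the relative Oka principle with approximation for holomorphic maps from the Stein manifold $U\times\C^N$ (applied along the submanifold $W$, using that a holomorphic spray from a neighbourhood of $W$ in the ambient space can be built from a spray on $Y$ composed with $F$ and a Docquier--Grauert tubular retraction, or simply applying the Oka principle for sections as in \cite[Theorem 6.15.1]{Forstneric2017E}) yields a homotopy of holomorphic maps $G_t:U\times\C^N\to Y$ with $G_0$ extending $\tilde f_0=f\circ F^{-1}$ and $G_t$ approximating $\tilde f_t$ uniformly on $F(K)$ and uniformly in $t$.

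The next step is to make the maps $G_t$ fibrewise algebraic in the $\C^N$ variables. Here I would use that $Y$ is \emph{algebraically} special elliptic: it admits a dominating algebraic spray $s:Y\times\C^m\to Y$, and in particular $Y$ is an algebraically Oka manifold in the relevant sense, so the algebraic relative Oka principle for morphisms from affine algebraic varieties (the base being $\C^N$ with an affine parameter, cf.\ \cite[Theorem 3.1]{Forstneric2006AJM}, \cite[Theorem 6.4]{Forstneric2023Indag}) shows that each $G_t:U\times\C^N\to Y$ can be approximated uniformly on $F(K)$, and uniformly in $t$, by maps that are algebraic in the $w=(w_1,\ldots,w_N)\in\C^N$ variables and holomorphic in $x\in U$ — i.e.\ by maps of class $\Ascr(U\times\C^N,Y)$ — while controlling the homotopy and keeping the value at $t=0$ unchanged (or close). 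Composing such a map with $F:\Omega_U\to U\times\C^N$ produces a map $\Omega_U\to Y$ which is holomorphic and algebraic on each fibre $\Omega_x$, since $F$ is fibrewise algebraic and composition of fibrewise algebraic maps with ambient fibrewise algebraic maps stays fibrewise algebraic.

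It remains to upgrade from $\Omega_U$ to all of $\Omega$ and to record the conclusion in the stated parametrised form $F:\Omega\times\C\to Y$. Since $U$ is Runge in $X$ and $K$ is $\Oscr(\Omega)$-convex, standard Oka--Weil approximation on $\Omega$ (using that $\Omega$ is Stein by Theorem~\ref{th:Stein2}, together with the density of $\Ascr(\Omega)$ in $\Oscr(\Omega)$ from Theorem~\ref{th:OkaWeil} and the Oka property of $Y$) lets me correct the map over a larger set and, by a Mergelyan--Oka exhaustion of $\Omega$ by $\Oscr(\Omega)$-convex compacts, globalise to a holomorphic map $\Omega\to Y$ of class $\Ascr(\Omega,Y)$; parametrising the whole homotopy in $t$ and, as in the cited relative Oka theorems, regluing the $t$-family into a single holomorphic $F(\cdot,t)$ over a disc (hence over $\C$ after reparametrisation, with $F(\cdot,0)=f$) gives the asserted statement. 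The last sentence for $\Omega=V(g,n)$ is then immediate from the special case $X=T(g,n)$, $Z=\wh V(g,n)$, $H=\bigcup_i s_i(T(g,n))$. The main obstacle I expect is the second step: the \emph{algebraic} relative Oka principle over the affine base $U\times\C^N$ must be invoked with holomorphic dependence on the non-algebraic parameter $x\in U$ and with uniform-in-$t$ control of the homotopy; one must check that the available relative/parametric versions (from \cite{Forstneric2006AJM,Forstneric2017E,Forstneric2023Indag}) indeed accommodate a holomorphic parameter in the base together with a homotopy parameter, and that the algebraic approximation can be taken compatibly with the interpolation condition $F(\cdot,0)=f$.
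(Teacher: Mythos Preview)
Your proposal has a genuine gap at precisely the point you flag as the ``main obstacle.'' In step two you want to approximate $G_t:U\times\C^N\to Y$ by maps that are algebraic in the $\C^N$ variables while remaining holomorphic in $x\in U$ and varying holomorphically in $t$. The theorems you cite (\cite[Theorem 3.1]{Forstneric2006AJM}, \cite[Theorem 6.4]{Forstneric2023Indag}) concern algebraic maps from an affine algebraic source; $U\times\C^N$ with a non-algebraic Stein parameter $U$ is not such a source, and invoking those results ``with holomorphic dependence on $x$'' is essentially asserting a holomorphically parametrised algebraic Oka principle, which is not in the literature in that form and is close to what the theorem itself is claiming. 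Your globalisation step then inherits the same circularity: the exhaustion/Runge argument again needs fibrewise algebraicity to be preserved, i.e.\ the very statement you are proving.

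The paper bypasses this entirely by linearising through the spray rather than through the embedding $F$. Since $Y$ is algebraically \emph{special} elliptic, the spray bundle is trivial, $E=Y\times\C^m$, and the pullback $f^*E\to\Omega$ is fibrewise algebraically trivial (because $f$ is fibrewise algebraic). One lifts the homotopy $f_t$ to holomorphic sections $\xi_t$ of $f^*E$ with $\xi_0=0$ using \cite[Proposition 6.5.1]{Forstneric2017E}; these are simply $\C^m$-valued holomorphic functions on a neighbourhood of $K$. Now the only approximation tool needed is the scalar Oka--Weil theorem already established (Theorem~\ref{th:OkaWeil}), applied to each component, with $t\in[0,1]\hookrightarrow\C$ treated as an additional complex parameter. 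Composing the resulting $\tilde\xi_t\in\Ascr(\Omega)^m$ with the fibrewise algebraic map $\tilde s=s\circ\iota$ yields $F(\cdot,t)\in\Ascr(\Omega,Y)$ directly, and $F(\cdot,0)=f$ is automatic since $\xi_0=0$. The point is that the nonlinear ``algebraic-with-holomorphic-parameter'' problem you face reduces, via the trivial spray bundle, to the linear problem already handled by Theorem~\ref{th:OkaWeil}; your route through the ambient $U\times\C^N$ does not achieve this reduction.
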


\begin{proof}[Proof]
It suffice to inspect the proof of \cite[Theorem 3.1]{Forstneric2006AJM} 
and apply Theorem \ref{th:OkaWeil}. We recall the main steps. 
Let $p:E\to Y$ be an algebraic vector bundle and $s:E\to Y$ a dominating 
algebraic spray. Consider the commuting diagram
\[ 
\xymatrixcolsep{4pc}
\xymatrix{
	 f^*E \  \ar[d]_{\tilde p} \ar[dr]^{\tilde s} \ar@{^{(}->}[r]^{\iota}  
	 & E \ar[d]_{p} \ar@/^1pc/[d]^{s}  \\ 
	 \Omega  \ar[r]^{f} & Y
}
\] 

\noindent 
where the map $\tilde p$ in the first column is the $f$-pullback of the 
vector bundle $p:E\to Y$, the map
$\iota$ in the top row is the natural inclusion,
and $\tilde s=s\circ \iota$ is a dominating holomorphic spray over $f$
(the pullback of $s$ by $f$). 
Note that $\tilde s=f$ holds on the zero section of $f^*E$. 
Since $f$ is fibrewise algebraic and $s$ is algebraic, 
the maps $\iota$ and $\tilde s$ are algebraic on 
$f^*E|\Omega_x$ for every $x\in X$.
(Here, $\pi:\Omega\to X$ and $\Omega_x=\pi^{-1}(x)$
are as in Theorem \ref{th:A-embedding}.)
Replacing the spray bundle $(E,p,s)$ with a sufficiently high
iterate of itself (see \cite[Definition 6.3.5]{Forstneric2017E})
and shrinking $U$ around $K$, 
\cite[Proposition 6.5.1]{Forstneric2017E} give a lift of 
the homotopy of holomorphic maps $f_t:U\to Y$ in the statement of the
theorem to a homotopy of holomorphic sections $\xi_t:U\to f^*E|_U$, 
with $\xi_0$ the zero section, such that 
$\tilde s\circ \xi_t = f_t$ for $t\in[0,1]$. 

Assume now that the vector bundle $p:E\to Y$ is a algebraically trivial. 
Then, every iterate of $(E,p,s)$ is also algebraically trivial. Since 
the map $f:\Omega\to Y$ is fibrewise algebraic,  
the pullback bundle $f^*E\to\Omega$ is fibrewise algebraically 
trivial. Hence, sections of $f^*E\to \Omega$ 
can be identified with maps from $\Omega$ to the fibre, 
which is a Euclidean space. Thus, 
Theorem \ref{th:OkaWeil} lets us approximate every section
$\xi_t$ uniformly on $K$ by a holomorphic fibrewise algebraic section
$\tilde \xi_t:\Omega\to f^*E$. The map
$F_t=\tilde s \circ \tilde \xi_t:\Omega \to Y$ is then 
of class $\Ascr(\Omega,Y)$ and it approximates $f_t$ on $K$
for every $t\in [0,1]$. The approximation can be made uniform in $t\in [0,1]$
by including $[0,1]$ in a standard way into $\C$ and 
applying Theorem \ref{th:OkaWeil} with the compact 
holomorphically convex set $K\times [0,1]$ in $\Omega\times\C$.
(The homotopy $\{\xi_t\}_{t\in[0,1]}$ 
is first approximated on $K\times [0,1]$ by a holomorphic 
map in a neighbourhood of $K\times [0,1]$ in $\Omega\times\C$
using Mergelyan's theorem.)
This gives a holomorphic map $F:\Omega\times \C \to Y$ 
satisfying the conclusion of the theorem 
which is algebraic on $\Omega_x\times \C$ for every $x\in X$.

The last statement follows from the fact 
that a homotopy $f_t:\Omega\to Y$ $(t\in[0,1])$ 
between holomorphic maps $f_0,f_1$ can be deformed, 
with fixed ends at $t=0,1$, to a homotopy consisting 
of holomorphic maps when $\Omega$ is Stein and $Y$ 
is an Oka manifold (which holds in our situation).
\end{proof}

\begin{remark}
The relative algebraic Oka principle in 
\cite[Theorem 3.1]{Forstneric2006AJM} holds for all algebraically
elliptic target manifolds $Y$. (It was stated for the ostensibly 
bigger class of algebraically subelliptic manifolds, but it has 
recently been shown by Kaliman and Zaidenberg 
\cite{KalimanZaidenberg2024FM}
that these two classes of manifolds coincide. See also
\cite[Theorem 6.2]{Forstneric2023Indag} for a more complete
list of equivalent algebraic ellipticity conditions.)
We do not know whether Theorem \ref{th:AOP} holds
for this bigger class of manifolds. 
\end{remark}

An example of a flexible manifold, which is of particular 
interest in the theory of minimal surfaces in Euclidean 
spaces $\R^n$, $n\ge 3$, is the punctured null quadric
\[
	\boldA = \{z=(z_1,\ldots,z_n)\in \C^n\setminus \{0\}:
	z_1^2 + z_2^2 + \cdots + z_n^2=0\}.
\]
(See \cite[Proposition 1.15.3]{AlarconForstnericLopez2021}
and \cite{Andrist2023}.) 
Algebraic 1-forms with values in $\bold A$ on finitely punctured compact
Riemann surfaces, which satisfy the real period vanishing 
conditions on closed curves in $M$, give rise to conformal minimal
surfaces $M\to \R^n$ of finite total curvature
via the Enneper--Weierstrass representation formula, and vice versa.
(See \cite{Osserman1986} and 
\cite[Chap.\ 4]{AlarconForstnericLopez2021} for a discussion
of this topic.) Hence, the fact that the Teichm\"uller
family $V(g,n)$ for $n\ge 1$ admits nontrivial fibrewise algebraic maps
$V(g,n)\to \boldA$ (see Theorem \ref{th:AOP})
raises the following question.

%
%
\begin{problem} Is it possible to realise the fibres of $V(g,n)$ 
as a family of immersed minimal surfaces of finite total curvature 
in some $\R^k$, $k\ge 3$, depending holomorphically on the parameter
$t\in T(g,n)$?
\end{problem}

%
%
%
%
\medskip 
\noindent {\bf Acknowledgements.} 
Research was supported by the European Union 
(ERC Advanced grant HPDR, 101053085) and grants P1-0291 and 
N1-0237 from ARIS, Republic of Slovenia. 
I wish to thank Finnur L\'arusson for asking the question 
answered by Theorem \ref{th:main} and for 
proposing Corollary \ref{cor:2} in a private communication,
and Daniel Huybrechts and Vladimir Markovi\'c for helpful
remarks and suggestions. I also thank Yiran Lin
for the idea behind the proof of Theorem \ref{th:A-embedding-global}, 
and an anonymous referee for having communicated it.




\end{document}